\theoremstyle{plain}
\newtheorem{thm}{Theorem}[section]
\newtheorem{cor}[thm]{Corollary}
\newtheorem{lem}[thm]{Lemma}
\newtheorem{prop}[thm]{Proposition}
\theoremstyle{definition}
\theoremstyle{remark}
\newtheorem{rem}[thm]{Remark}
\numberwithin{equation}{section}
\newcommand{\average}{{\mathchoice {\kern1ex\vcenter{\hrule height.4pt
width 6pt depth0pt} \kern-9.7pt} {\kern1ex\vcenter{\hrule
height.4pt width 4.3pt depth0pt} \kern-7pt} {} {} }}
\def\R{\mathbb{R}}
\begin{document}

\title[Nonexistence results for nonlocal equations]{Nonexistence results for nonlocal equations with critical and supercritical nonlinearities}

\author{Xavier Ros-Oton}

\address{Universitat Polit\`ecnica de Catalunya, Departament de Matem\`{a}tica  Aplicada I, Diagonal 647, 08028 Barcelona, Spain}
\email{xavier.ros.oton@upc.edu}

\thanks{The authors were supported by grants MINECO MTM2011-27739-C04-01 and GENCAT 2009SGR-345.}

\author{Joaquim Serra}

\address{Universitat Polit\`ecnica de Catalunya, Departament de Matem\`{a}tica  Aplicada I, Diagonal 647, 08028 Barcelona, Spain}

\email{joaquim.serra@upc.edu}

\keywords{Nonexistence, integro-differential operators, supercritical nonlinearities, fractional Laplacian}

\maketitle

\begin{abstract}
We prove nonexistence of nontrivial bounded solutions to some nonlinear problems involving nonlocal operators of the form
\[Lu(x)=\sum a_{ij}\partial_{ij}u+{\rm PV}\int_{\R^n}(u(x)-u(x+y))K(y)dy.\]
These operators are infinitesimal generators of symmetric L\'evy processes.
Our results apply to even kernels $K$ satisfying that
$K(y)|y|^{n+\sigma}$ is nondecreasing along rays from the origin, for some $\sigma\in(0,2)$ in case $a_{ij}\equiv0$ and for $\sigma=2$ in case that $(a_{ij})$ is a positive definite symmetric matrix.

Our nonexistence results concern Dirichlet problems for $L$ in star-shaped domains with critical and supercritical nonlinearities (where the criticality condition is in relation to $n$ and $\sigma$).

We also establish nonexistence of bounded solutions to semilinear equations involving other nonlocal operators such as the higher order fractional Laplacian $(-\Delta)^s$ (here $s>1$) or the fractional $p$-Laplacian.
All these nonexistence results follow from a general variational inequality in the spirit of a classical identity by Pucci and Serrin.
\end{abstract}

\section{Introduction and results}

The aim of this paper is to prove nonexistence results for the following type of nonlinear problems
\begin{equation}\label{pb}
\left\{ \begin{array}{rcll} L u &=&f(x,u)&\textrm{in }\Omega \\
u&=&0&\textrm{in }\mathbb R^n\backslash \Omega,\end{array}\right.\end{equation}
where $\Omega\subset\R^n$ is a bounded domain, $f$ is a critical or supercritical nonlinearity (as defined later), and $L$ is an integro-differential elliptic operator.
Our main results concern operators of the form
\begin{equation}\label{L_K}
L u(x)={\rm PV}\int_{\R^n}\bigl(u(x)-u(x+y)\bigr)K(y)dy
\end{equation}
and
\begin{equation}\label{levy}
Lu(x)=\sum_{i,j}a_{ij}\partial_{ij}u+{\rm PV}\int_{\R^n}\bigl(u(x)-u(x+y)\bigr)K(y)dy,
\end{equation}
where $(a_{ij})$ is a positive definite matrix (independent of $x\in\Omega$) and $K$ is a nonnegative kernel satisfying
\begin{equation}\label{**}
K(y)=K(-y)\quad \textrm{and}\quad \int_{\R^n}\frac{|y|^2}{1+|y|^2}K(y)dy<\infty.
\end{equation}
These operators are infinitesimal generators of symmetric L\'evy processes.

We will state two different nonexistence results, one corresponding to \eqref{L_K} and the other to \eqref{levy}.

On the one hand, we consider operators \eqref{L_K} that may not have a definite order but only satisfy, for some $\sigma\in(0,2)$,
\begin{equation}\label{condicio}
K(y)|y|^{n+\sigma}\quad\textrm{is nondecreasing along rays from the origin.}
\end{equation}
Heuristically, \eqref{condicio} means that even if the order is not defined, $\sigma$ acts as an upper bound for the order of the operator ---see Section \ref{examples} for some examples.
For these operators we prove, under some additional technical assumptions on the kernel, nonexistence of nontrivial bounded solutions to \eqref{pb} in star-shaped domains for supercritical nonlinearities.
When $f(x,u)=|u|^{q-1}u$, the critical power for this class of operators is $q=\frac{n+\sigma}{n-\sigma}$.

On the other hand, we establish the analogous result for second order integro-differential elliptic operators \eqref{levy} with kernels $K$ satisfying \eqref{condicio} with $\sigma=2$.
In this case, the critical power is $q=\frac{n+2}{n-2}$.

Moreover, we can use the same ideas to prove an abstract variational inequality that applies to more general problems.
For instance, we can obtain nonexistence results for semilinear equations involving the higher order fractional Laplacian $(-\Delta)^s$ (i.e., with $s>1$) or the fractional $p$-Laplacian.

When $L$ is the Laplacian $-\Delta$, the nonexistence of nontrivial solutions to \eqref{pb} for critical and supercritical nonlinearities
in star-shaped domains follows from the celebrated Pohozaev identity \cite{P}.
For positive solutions, this result can also be proved with the moving spheres method \cite{Sweers,RZ}.
For more general elliptic operators (such as the $p$-Laplacian, the bilaplacian $\Delta^2$, or $k$-hessian operators), the nonexistence of regular solutions usually follows from Pohozaev-type or Pucci-Serrin identities \cite{PSerrin}.

When $L$ is the fractional Laplacian $(-\Delta)^s$ with $s\in(0,1)$, which corresponds to $K(y)=c_{n,s}|y|^{-n-2s}$ in \eqref{L_K}, this nonexistence result for problem \eqref{pb}
was first obtained by Fall-Weth for positive solutions \cite{FW} (by using the moving spheres method).
In $C^{1,1}$ domains, the nonexistence of nontrivial solutions (not necessarily positive) can be deduced from the Pohozaev identity for the fractional Laplacian,
recently established by the authors in \cite{RS-Poh,RS-CRAS}.

Both the local operator $-\Delta$ and the nonlocal operator $(-\Delta)^s$ satisfy a property of invariance under scaling.
More precisely, denoting $w_{\lambda}(x)=w(\lambda x)$, these operators satisfy $L w_\lambda(x)=\lambda^\sigma Lw(\lambda x)$, with $\sigma=2$ in case $L=-\Delta$ and $\sigma=2s$ in case $L=(-\Delta)^s$.
These scaling exponents are strongly related to the critical powers $q=\frac{n+2}{n-2}$ and $q=\frac{n+2s}{n-2s}$ obtained for power nonlinearities $f(x,u)=|u|^{q-1}u$ in \eqref{pb}.

Here, we prove a nonexistence result for problem \eqref{pb} with operators $L$ that may not satisfy a scale invariance condition
but satisfy \eqref{condicio} instead.
Our arguments are in the same philosophy as Pucci-Serrin \cite{PSerrin}, where they proved a general variational identity that applies to many second order problems.
Here, we prove a variational inequality that applies to the previous integro-differential problems.

Before stating our results recall that, given $\sigma>0$ and $\Omega\subset\R^n$, the nonlinearity $f\in C^{0,1}_{\rm loc}(\overline \Omega\times \R)$ is said to be \emph{supercritical} if
\begin{equation}\label{supercritical}
\frac{n-\sigma}{2}\,t\,f(x,t)>nF(x,t)+x\cdot F_x(x,t)\quad\textrm{for all}\ \ x\in\Omega\ \ \textrm{and}\ \ t\neq0,
\end{equation}
where $F(x,t)=\int_0^tf(x,\tau)d\tau$.
When $f(x,u)=|u|^{q-1}u$, this corresponds to $q>\frac{n+\sigma}{n-\sigma}$.

As explained later on in this Introduction, by bounded solution of \eqref{pb} we mean a critical point $u\in L^\infty(\Omega)$ of the associated energy functional.

Our first nonexistence result reads as follows.
Note that it applies not only to positive solutions but also to changing-sign ones.

In the first two parts of the theorem, we assume the solution $u$ to be $W^{1,r}$ for some $r>1$.
This is a natural assumption that is satisfied when $L$ is a pure fractional Laplacian and also for those operators $L$ with kernels $K$ satisfying an additional assumption on its ``order'', as stated in part (c).

\begin{thm}\label{th1}
Let $K$ be a nonnegative kernel satisfying \eqref{**}, \eqref{condicio} for some $\sigma\in(0,2)$, and
\begin{equation}\label{grad}
K\ \textrm{is}\ C^1(\R^n\setminus\{0\})\ \textrm{and}\ \,|\nabla K(y)|\leq C\,\frac{K(y)}{|y|}\quad \textrm{for all}\ y\neq0
\end{equation}
for some constant $C$.
Let $L$ be given by \eqref{L_K}.
Let $\Omega\subset\R^n$ be any bounded star-shaped domain, and $f\in C^{0,1}_{\rm loc}(\overline \Omega\times \R)$ be a supercritical nonlinearity, i.e., satisfying \eqref{supercritical}.
Let $u$ be any {bounded} solution of \eqref{pb}.
The following statements hold:
\begin{itemize}
\item[(a)] If $u\in W^{1,r}(\Omega)$ for some $r>1$, then $u\equiv0$.

\item[(b)] Assume that $K(y)|y|^{n+\sigma}$ is not constant along some ray from the origin, and that the nonstrict inequality
\begin{equation}\label{supercriticalorcritical}
\frac{n-\sigma}{2}\,t\,f(x,t)\geq nF(x,t)+x\cdot F_x(x,t)\quad\textrm{for all}\ \ x\in\Omega\ \ \textrm{and}\ \ t\in\R
\end{equation}
holds instead of \eqref{supercritical}.
If $u\in W^{1,r}(\Omega)$ for some $r>1$, then $u\equiv0$.

\item[(c)] Assume that in addition $\Omega$ is convex, that the kernel $K$ satisfies
\begin{equation}\label{condicioepsilon}
K(y)|y|^{n+\epsilon}\quad\textrm{is nonincreasing along rays from the origin}
\end{equation}
for some $\epsilon\in(0,\sigma)$, and that
\begin{equation}\label{infsup}
\max_{\partial B_r}K(y)\leq C\min_{\partial B_r}K(y)\quad \textrm{for all}\ r\in(0,1)
\end{equation}
for some constant $C$.
Then, $u\in W^{1,r}(\Omega)$ for some $r>1$, and therefore statements (a) and (b) hold without the assumption $u\in W^{1,r}(\Omega)$.
\end{itemize}
\end{thm}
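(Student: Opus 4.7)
The plan is to derive a Pucci-Serrin style variational inequality by testing the equation against $x\cdot\nabla u$, the infinitesimal generator of the dilation $u_\lambda(x)=u(\lambda x)$. Assuming without loss of generality that $\Omega$ is star-shaped with respect to the origin, $u_\lambda$ is supported in $\lambda^{-1}\Omega\subset\Omega$ for every $\lambda\geq 1$, so it is an admissible test function and $u_\lambda\to u$ as $\lambda\to 1^+$. The hypothesis $u\in W^{1,r}(\Omega)$ is precisely what is needed to make $x\cdot\nabla u$ meaningful as an integrand and to pass rigorously to the limit in $\lambda$.

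The argument rests on producing two distinct expressions for $\langle Lu,x\cdot\nabla u\rangle$. The first, from the equation and an integration by parts in $\Omega$ (using $u|_{\partial\Omega}=0$), reads
\[
\langle Lu,x\cdot\nabla u\rangle=\int_\Omega f(x,u)\,x\cdot\nabla u\,dx=-n\int_\Omega F(x,u)\,dx-\int_\Omega x\cdot F_x(x,u)\,dx.
\]
For the second, write $L$ through the symmetric form $\mathcal E(u,u)=\tfrac12\iint(u(x)-u(y))^2K(x-y)\,dx\,dy$ and rescale in $\mathcal E(u_\lambda,u_\lambda)$ via $(x,y)\mapsto(\lambda^{-1}x,\lambda^{-1}y)$ to get the explicit formula $\mathcal E(u_\lambda,u_\lambda)=\lambda^{-2n}\cdot\tfrac12\iint(u(x)-u(y))^2K((x-y)/\lambda)\,dx\,dy$. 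Differentiating at $\lambda=1^+$ and using the chain-rule identity $\tfrac{d}{d\lambda}\big|_{\lambda=1^+}\mathcal E(u_\lambda,u_\lambda)=2\langle Lu,x\cdot\nabla u\rangle$ yields
\[
2\langle Lu,x\cdot\nabla u\rangle=-2n\,\mathcal E(u,u)-\tfrac12\iint(u(x)-u(y))^2\,(x-y)\cdot\nabla K(x-y)\,dx\,dy.
\]
Hypotheses \eqref{grad} and $u\in W^{1,r}$ are used here to differentiate under the integral sign.

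The monotonicity \eqref{condicio} is equivalent to the pointwise bound $(n+\sigma)K(z)+z\cdot\nabla K(z)\geq 0$, which estimates the last double integral from below by $-(n+\sigma)\mathcal E(u,u)$. Using $\mathcal E(u,u)=\int_\Omega uf(x,u)\,dx$ from the equation and combining both displays produces the variational inequality
\[
\int_\Omega\Bigl(\tfrac{n-\sigma}{2}\,uf(x,u)-nF(x,u)-x\cdot F_x(x,u)\Bigr)dx\leq 0.
\]
Under \eqref{supercritical}, the integrand is strictly positive wherever $u\neq 0$, forcing $u\equiv 0$ and proving (a). For (b), if $K(y)|y|^{n+\sigma}$ is strictly increasing on some subray, the pointwise bound becomes strict on an open cone; a short argument using that $u\equiv 0$ outside $\Omega$ and $u\not\equiv 0$ inside (hence $(u(x)-u(y))^2>0$ on a set meeting that cone in positive measure) promotes the variational inequality to a strict one, which suffices to conclude $u\equiv 0$ from the nonstrict assumption \eqref{supercriticalorcritical}.

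Part (c) reduces to upgrading the a priori boundedness of $u$ to $u\in W^{1,r}(\Omega)$ for some $r>1$, so that (a)--(b) apply unconditionally. The extra hypotheses \eqref{condicioepsilon} and \eqref{infsup} make the kernel comparable on each sphere and ensure uniform ellipticity of order at least $\epsilon>0$, placing the problem inside the scope of the interior and boundary H\"older regularity theory for nonlocal elliptic equations. One expects an interior gradient bound $|\nabla u(x)|\leq C\,\mathrm{dist}(x,\partial\Omega)^{\gamma-1}$ together with a boundary decay $|u(x)|\leq C\,\mathrm{dist}(x,\partial\Omega)^\gamma$ for some $\gamma\in(0,1)$; convexity of $\Omega$ gives the uniform interior cone condition that integrates these bounds to $u\in W^{1,r}(\Omega)$ for any $r\in(1,(1-\gamma)^{-1})$. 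The principal technical obstacle throughout is the rigorous justification of the derivative identity in the second paragraph under only the mild regularity $u\in W^{1,r}$: one must approximate $u$ by smooth functions and apply dominated convergence while controlling the near-diagonal contribution of $(u(x)-u(y))^2\,|x-y|\,|\nabla K(x-y)|$ by means of \eqref{grad} and the $W^{1,r}$ bound on $u$.
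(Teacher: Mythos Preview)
Your strategy is correct and lands on the same Pohozaev-type inequality as the paper, but the route to the scaling term differs in a way worth noting. The paper does not differentiate the quadratic form under the integral sign. Instead, it tests against $u_\lambda$ for each $\lambda\ge 1$ to get $(u,u_\lambda)=\int_\Omega f(x,u)u_\lambda\,dx$, introduces $I_\lambda:=\lambda^{(n-\sigma)/2}(u,u_\lambda)$, and proves the \emph{finite} inequality $I_\lambda\le I_1$ directly from Cauchy--Schwarz together with the scaling bound $\|u_\lambda\|\le\lambda^{-(n-\sigma)/2}\|u\|$ (which is equivalent to \eqref{condicio}). Then $\left.\tfrac{d}{d\lambda}\right|_{\lambda=1^+}I_\lambda\le 0$ follows automatically, and the right-hand side is handled via a weak-$L^1$ convergence lemma for $(u_\lambda-u)/(\lambda-1)$ that uses only $u\in W^{1,r}$. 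For part~(b) the paper still avoids passing the derivative inside: it bounds $I_1-I_\lambda$ from below by $\tfrac12\iint(u(x)-u(x+y))^2|y|^{-n-\sigma}\{g(y)-g(y/\lambda)\}\,dx\,dy$ with $g(y)=K(y)|y|^{n+\sigma}$, and then applies Fatou to get the strict inequality.

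Your direct computation of $\tfrac{d}{d\lambda}\big|_{\lambda=1^+}\mathcal E(u_\lambda,u_\lambda)$ and use of the pointwise bound $(n+\sigma)K+z\cdot\nabla K\ge 0$ is equivalent in content, but it front-loads exactly the technical difficulty you flag at the end: justifying dominated convergence for the term $(u(x)-u(y))^2\,|z|\,|\nabla K(z)|$ with only $u\in L^\infty\cap W^{1,r}$ and the kernel hypotheses. The paper's Cauchy--Schwarz/Fatou device is precisely what lets one bypass that justification; you may find it cleaner to adopt. Your outline for part~(c) matches the paper's Proposition~\ref{regularity}: a barrier argument (using convexity) gives $|u|\le C\delta^{\epsilon/2}$, and interior H\"older estimates iterated at scale $\delta(x)$ give $|\nabla u|\le C\delta^{\epsilon/2-1}$, hence $u\in W^{1,r}$ for $r<(1-\epsilon/2)^{-1}$.
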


Note that in part (c) we have the additional assumption that the domain $\Omega$ is convex.
This is used to prove the $W^{1,r}$ regularity of bounded solutions to \eqref{pb} (and it is not needed for example when the operator is the fractional Laplacian, see Remark \ref{remark}).
Note also that condition \eqref{condicio} means in some sense that $L$ has order at most $\sigma$, while \eqref{condicioepsilon} means that $L$ is at least of order $\epsilon$ for some small $\epsilon>0$.

Some examples to which our result applies are sums of fractional Laplacians of different orders, anisotropic operators (i.e., with nonradial kernels), and also operators whose kernels have a singularity different of a power at the origin.
More examples are given in Section \ref{examples}.

Note that for $f(x,u)=|u|^{q-1}u$, part (a) gives nonexistence for supercritical powers $q>\frac{n+\sigma}{n-\sigma}$, while part (b) establishes nonexistence also for the critical power $q=\frac{n+\sigma}{n-\sigma}$.
The nonexistence of nontrivial solutions for the critical power in case that $K(y)|y|^{n+\sigma}$ is constant along all rays from the origin remains an open problem.
Even for the fractional Laplacian $(-\Delta)^s$, this has been only established for positive solutions,
and it is not known for changing-sign solutions.

The existence of nontrivial solutions in \eqref{pb} for subcritical nonlinearities was obtained by Servadei and Valdinoci \cite{SV} by using the mountain pass theorem.
Their result applies to nonlocal operators of the form \eqref{L_K} with symmetric kernels $K$ satisfying $K(y)\geq \lambda |y|^{-n-\sigma}$.

As stated in Theorem \ref{th1}, the additional hypotheses of part (c) lead to the $W^{1,r}(\Omega)$ regularity of bounded solutions for some $r>1$.
This is a consequence of the following proposition.

\begin{prop}\label{regularity}
Let $\Omega\subset \R^n$ be any bounded and convex domain.
Let $L$ be an operator satisfying the hypotheses of Theorem \ref{th1} (c), i.e., satisfying \eqref{L_K}, \eqref{**}, \eqref{condicio}, \eqref{grad}, \eqref{condicioepsilon}, and \eqref{infsup}.
Let $f\in C^{0,1}_{\rm loc}(\overline \Omega\times \R)$, and let $u$ be any bounded solution of \eqref{pb}.
Then,
\begin{equation}\label{solet}
\|u\|_{C^{\epsilon/2}(\R^n)}\leq C\qquad \textrm{and}\qquad |\nabla u(x)|\leq C\delta(x)^{\frac{\epsilon}{2}-1}\ \ \textrm{in}\ \ \Omega,\end{equation}
where $\delta(x)=\textrm{dist}(x,\partial\Omega)$ and $C$ is a constant that depends only on $\Omega$, $\epsilon$, $\sigma$, $f$, and $\|u\|_{L^{\infty}(\Omega)}$.
\end{prop}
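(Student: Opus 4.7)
I would split the statement into two pieces: first a global $C^{\epsilon/2}(\R^n)$ Hölder bound, then the gradient estimate, derived from the Hölder bound by a rescaling argument.

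\textbf{Step 1 (Hölder regularity).} The monotonicity hypotheses \eqref{condicio} and \eqref{condicioepsilon} yield two-sided ``stable-like'' kernel bounds, namely $c|y|^{-n-\epsilon}\leq K(y)\leq C|y|^{-n-\sigma}$ for $|y|\leq 1$ with the reverse inequalities at infinity, while \eqref{infsup} makes $K$ comparable to a radial kernel on each sphere. These bounds suffice to invoke standard interior Hölder regularity for nonlocal equations with bounded right-hand side (of Silvestre--Caffarelli--Kassmann type), giving $u\in C^{\alpha}_{\rm loc}(\Omega)$ for some $\alpha>0$ with constants depending only on $\|u\|_\infty$ and $\|f(\cdot,u)\|_\infty$. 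For the up-to-the-boundary part I would exploit convexity: at each $x_0\in\partial\Omega$ place $\Omega$ inside a tangent half-space $H$ with inward normal $\nu$, and compare $u$ with the explicit half-space barrier $\phi(x)=\bigl((x-x_0)\cdot\nu\bigr)_+^{\epsilon/2}$. After symmetrizing $y\mapsto-y$ (permitted by \eqref{**}), the worst singularity of $K$ near $y=0$ is tempered by a second-order Taylor remainder of $\phi$, while the decay $K(y)\lesssim|y|^{-n-\epsilon}$ at infinity makes the tail integrable; this shows $|L\phi|\leq C$ in a thin tubular neighborhood of $\partial\Omega$ inside $\Omega$. The weak maximum principle for $L$ then gives $|u(x)|\leq C\delta(x)^{\epsilon/2}$, and together with the interior Hölder estimate this upgrades to the claimed global bound $\|u\|_{C^{\epsilon/2}(\R^n)}\leq C$.

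\textbf{Step 2 (gradient).} For $x_0\in\Omega$, set $r=\delta(x_0)/2$, pick $x_1\in\partial\Omega$ realizing the distance, and form the blow-up
\[
v(y)=r^{-\epsilon/2}\bigl(u(x_0+ry)-u(x_0)\bigr),\qquad y\in\R^n.
\]
A direct change of variables shows $v$ satisfies $L_{K_r}v=\tilde f$ with rescaled kernel $K_r(y)=r^{-n}K(y/r)$, and one checks that $K_r$ satisfies \eqref{**}, \eqref{condicio}, \eqref{grad}, \eqref{condicioepsilon}, \eqref{infsup} with the \emph{same} constants. Step 1 applied globally, together with $u(x_1)=0$, yields $|v(y)|\leq C(1+|y|^{\epsilon/2})$ on all of $\R^n$. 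I would then invoke an interior $C^1$ estimate of Silvestre type for the rescaled equation in $B_1$---here \eqref{grad} is the kernel-regularity hypothesis that lets one differentiate $L$ and transfer smoothness to $v$---obtaining $|\nabla v(0)|\leq C$ with $C$ independent of $r$. Undoing the scaling gives $|\nabla u(x_0)|\leq C\,r^{\epsilon/2-1}$, as asserted.

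\textbf{Main obstacle.} The delicate point is constructing the boundary barrier in Step 1 without $L$ having a definite order: one must use the two-sided ray-monotonicity \eqref{condicio}--\eqref{condicioepsilon} and \eqref{infsup} in tandem to control the symmetrized integrand defining $L(\delta^{\epsilon/2})$ all the way to $\partial\Omega$, and convexity is exactly what permits the reduction to the half-space computation above (for nonconvex domains the signed distance loses concavity and the barrier argument breaks down). Step 2 is then comparatively routine, but depends essentially on the observation that the family $\{K_r\}_{r\in(0,1)}$ lies in the same class as $K$, yielding the uniform interior estimates needed for the scaling to close.
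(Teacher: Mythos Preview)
Your overall architecture matches the paper's: build a half-space barrier to get $|u|\leq C\delta^{\epsilon/2}$, then combine interior estimates with rescaling. Step~2 is essentially the paper's argument (modulo a sign in the rescaled kernel: the change of variables gives $K_r(y)=r^{n}K(ry)$, or $r^{n+\epsilon}K(ry)$ after normalizing, not $r^{-n}K(y/r)$).

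The genuine gap is the barrier computation in Step~1. Your claim that $|L\phi|\leq C$ near $\partial\Omega$, justified by a ``second-order Taylor remainder'' of $\phi(x)=((x-x_0)\cdot\nu)_+^{\epsilon/2}$, is false: $\phi$ is only $C^{\epsilon/2}$, and its second normal derivative blows up like $\delta^{\epsilon/2-2}$. Over $|y|<\delta(x)/2$ the symmetrized integrand then contributes on the order of
\[
\delta^{\epsilon/2-2}\int_{|y|<\delta}|y|^{2}\,|y|^{-n-\sigma}\,dy\;\sim\;\delta^{\epsilon/2-\sigma}\longrightarrow\infty
\]
since $\sigma>\epsilon$. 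Concretely, for $K(y)=|y|^{-n-\sigma}$ (which satisfies every hypothesis of the proposition) one has $L\phi=(-\Delta)^{\sigma/2}(x_n)_+^{\epsilon/2}=c_{\sigma,\epsilon}\,x_n^{\epsilon/2-\sigma}$, unbounded at the boundary. Note also that a two-sided bound $|L\phi|\leq C$ would not by itself yield the comparison $|u|\leq C_1\phi$: for the maximum principle you need $L(C_1\phi)\geq\|f(\cdot,u)\|_\infty\geq Lu$ for large $C_1$, which requires a positive lower bound on $L\phi$, not merely boundedness.

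What is actually needed---and what the paper proves in Lemma~\ref{supersolution}---is precisely the one-sided inequality $L\psi\geq0$ in the half-space for $\psi=(x_n)_+^{\epsilon/2}$. This holds, but not through any Taylor bound. The paper reduces to dimension one, observes that the symmetrized difference $2\psi(x)-\psi(x+t)-\psi(x-t)$ changes sign exactly once in $|t|$ (at some $\rho>0$), and then uses the ray-monotonicity \eqref{condicioepsilon} of $K(t)|t|^{1+\epsilon}$ to compare $L\psi$ against the fractional Laplacian of the \emph{matching} order $\epsilon/2$, for which the exact identity $(-\Delta)^{\epsilon/2}(x_n)_+^{\epsilon/2}=0$ is available. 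That sign-plus-monotonicity comparison is the missing idea in your argument, and it is exactly where hypothesis \eqref{condicioepsilon} enters the proof; your Step~1 as written does not use \eqref{condicioepsilon} in any essential way.
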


Note that \eqref{solet} and the fact that $\Omega$ is convex imply $u\in W^{1,r}(\Omega)$ for all $1<r<\frac{1}{1-\epsilon/2}$.
In \eqref{solet} the exponents $\epsilon/2$ are optimal, as seen when $L=(-\Delta)^{\epsilon/2}$ (see \cite{RS}).

Our second nonexistence result, stated next, deals with operators of the form \eqref{levy}.
Here, the additional assumptions on $\Omega$ and $K$ leading to the $W^{1,r}$ regularity of solutions are not needed thanks to the presence of the second order constant coefficients regularizing term.

\begin{thm}\label{corlevy}
Let $L$ be an operator of the form \eqref{levy},
where $(a_{ij})$ is a positive definite symmetric matrix and $K$ is a nonnegative kernel satisfying \eqref{**}.
Assume in addition that \eqref{grad} holds, and that
\begin{equation}\label{condicio2levy}
K(y)|y|^{n+2}\ \textrm{is nondecreasing along rays from the origin.}
\end{equation}

Let $\Omega\subset\R^n$ be any bounded star-shaped domain, $f\in C^{0,1}_{\rm loc}(\overline \Omega\times \R)$, and $u$ be any bounded solution of \eqref{pb}.
If \eqref{supercriticalorcritical} holds with $\sigma=2$, then $u\equiv0$.
\end{thm}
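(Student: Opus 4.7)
The strategy is to implement the Pucci-Serrin variational approach for the mixed local/nonlocal operator $L$. The matched scaling exponent $\sigma=2$ for the local second-order term $\sum a_{ij}\partial_{ij}$ and the upper-order condition \eqref{condicio2levy} on the nonlocal part $\mathcal{L}_K u:=\mathrm{PV}\int(u(x)-u(x+y))K(y)dy$ is what allows the two contributions to combine with a common factor $(n-2)/2$ on the right-hand side. Since $u$ is only assumed bounded, I would first upgrade its regularity using the local term: rewriting the equation as $\sum a_{ij}\partial_{ij}u=f(x,u)-\mathcal{L}_K u$ and bootstrapping (the nonlocal tail is controlled by \eqref{**} once $u$ is slightly regular, and the singular part of $\mathcal{L}_K u$ near $y=0$ is tamed at each step by \eqref{grad} and the regularity gained at the previous one), classical Schauder and Calder\'on-Zygmund estimates for the constant coefficient uniformly elliptic operator $\sum a_{ij}\partial_{ij}$ give $u\in C^{1,\alpha}(\overline\Omega)$ for some $\alpha>0$, which is ample regularity for the integrations by parts that follow.

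Next I would test the equation against $x\cdot\nabla u$ and split the resulting integral into a local and a nonlocal part. For the local part, the classical Pucci-Serrin identity gives
\[
\int_\Omega\Bigl(\sum_{i,j}a_{ij}\partial_{ij}u\Bigr)(x\cdot\nabla u)\,dx=\frac{n-2}{2}\int_\Omega u\sum_{i,j}a_{ij}\partial_{ij}u\,dx+\frac{1}{2}\int_{\partial\Omega}\sum_{i,j}a_{ij}\partial_iu\,\partial_ju\,(x\cdot\nu)\,d\sigma,
\]
and the boundary integral is nonnegative since $(a_{ij})$ is positive definite and $x\cdot\nu\geq0$ on $\partial\Omega$. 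For the nonlocal part, I would invoke the general variational inequality announced in the Introduction: the monotonicity hypothesis \eqref{condicio2levy}, inserted into the double-integral representation of $\int(\mathcal{L}_K u)(x\cdot\nabla u)$ and manipulated via a change of variables $y\mapsto\lambda y$ together with an integration in $\lambda$, converts the monotonicity of $K(y)|y|^{n+2}$ into the one-sided bound
\[
\int_\Omega(\mathcal{L}_K u)(x\cdot\nabla u)\,dx\geq\frac{2-n}{2}\int_\Omega u\,\mathcal{L}_K u\,dx.
\]

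Adding both contributions, using $Lu=f(x,u)$ and the identity $\int_\Omega f(x,u)(x\cdot\nabla u)\,dx=-n\int_\Omega F(x,u)\,dx-\int_\Omega x\cdot F_x(x,u)\,dx$ (valid because $u=0$ on $\partial\Omega$ implies $F(x,u)=0$ there), I expect to arrive at
\[
\int_\Omega\left[\frac{n-2}{2}\,u\,f(x,u)-nF(x,u)-x\cdot F_x(x,u)\right]dx\geq\frac{1}{2}\int_{\partial\Omega}\sum_{i,j}a_{ij}\partial_iu\,\partial_ju\,(x\cdot\nu)\,d\sigma\geq0.
\]
Hypothesis \eqref{supercriticalorcritical} with $\sigma=2$ says exactly that the integrand on the left is $\leq0$ pointwise in $\Omega$. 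Therefore both the bulk integral and the boundary term vanish; by positive definiteness of $(a_{ij})$ and the fact that $x\cdot\nu>0$ on a full-measure portion of $\partial\Omega$, this forces $\nabla u\equiv0$ on $\partial\Omega$, so the full Cauchy data of $u$ vanish there, and the unique continuation principle applied to the uniformly elliptic equation $\sum a_{ij}\partial_{ij}u=f(x,u)-\mathcal{L}_K u$ (with bounded right-hand side) yields $u\equiv0$.

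The main obstacle is the nonlocal Pohozaev-type inequality: making rigorous the formal computation that extracts the one-sided estimate for $\int_\Omega(\mathcal{L}_K u)(x\cdot\nabla u)\,dx$ from \eqref{condicio2levy} on a bounded domain (as opposed to all of $\R^n$), since the exterior Dirichlet condition $u\equiv0$ outside $\Omega$ introduces nonvanishing contributions in the double integral whose sign must be controlled using star-shapedness of $\Omega$ together with the symmetry $K(y)=K(-y)$. A secondary technical point is justifying this integration by parts given only $C^{1,\alpha}$ regularity up to the boundary; this is handled by approximating $u$ by smooth functions and using \eqref{grad} together with \eqref{**} to pass to the limit.
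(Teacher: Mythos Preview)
Your approach---test the equation against $x\cdot\nabla u$, extract the classical local Pohozaev boundary term, and handle the nonlocal part by a one-sided inequality---is genuinely different from the paper's, which is purely variational: the paper tests against the rescaled function $u_\lambda(x)=u(\lambda x)$ for $\lambda\ge1$, differentiates in $\lambda$, and uses Cauchy--Schwarz on the full bilinear form \eqref{prodK2} to show $\frac{d}{d\lambda}\big|_{\lambda=1^+}I_\lambda\le 0$. In the paper the \emph{strict} inequality needed to reach a contradiction with the critical-case hypothesis \eqref{supercriticalorcritical} comes entirely from the nonlocal part (via a Fatou argument and the observation that $g(y)=K(y)|y|^{n+2}$ must tend to $0$ along rays, so $y\cdot\nabla g>0$ somewhere); no boundary term and no unique continuation are used.

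Your route has two genuine gaps. First, the regularity step: you bootstrap to $u\in C^{1,\alpha}(\overline\Omega)$, but the theorem assumes only that $\Omega$ is bounded and star-shaped---no Lipschitz, let alone $C^{1,\alpha}$, regularity of $\partial\Omega$ is imposed. Schauder/Calder\'on--Zygmund up to the boundary are unavailable in that generality, so the boundary integral $\int_{\partial\Omega}\sum a_{ij}\partial_i u\,\partial_j u\,(x\cdot\nu)\,d\sigma$ need not even make sense. The paper sidesteps this entirely: since $(a_{ij})$ is positive definite, any function in the energy space is automatically in $W^{1,2}(\Omega)$, and that is all Proposition~\ref{th2} and Lemma~\ref{lemaweak} require.

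Second, your closing step invokes unique continuation for ``$\sum a_{ij}\partial_{ij}u=f(x,u)-\mathcal L_K u$ with bounded right-hand side''. Standard unique continuation theorems (Carleman estimates) apply to equations of the form $-\sum a_{ij}\partial_{ij}u=V(x)u+W(x)\cdot\nabla u$; they say nothing when the right-hand side is merely a bounded function of $x$. Here $\mathcal L_K u(x)=-\int u(x+y)K(y)\,dy$ at points where $u(x)=0$, which is nonlocal and in general nonzero, so the equation cannot be put in the required local form. Hence even granting $C^{1,\alpha}(\overline\Omega)$ and $\nabla u=0$ on $\partial\Omega$, you cannot conclude $u\equiv 0$ by this route. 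The paper never needs this: once $\frac{d}{d\lambda}\big|_{\lambda=1^+}I_\lambda<0$ is established (strictly, whenever $u\not\equiv 0$ and $K\not\equiv 0$), the contradiction with \eqref{supercriticalorcritical} is immediate.
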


Note that for $f(x,u)=|u|^{q-1}u$ we obtain nonexistence for critical and supercritical powers $q\geq \frac{n+2}{n-2}$.

The proofs of Theorems \ref{th1} and \ref{corlevy} follow some ideas introduced in our proof of the Pohozaev identity for the fractional Laplacian \cite{RS-Poh}.
The key ingredient in all these proofs is the scaling properties both of the bilinear form associated to $L$ and of the potential energy associated to $f$.
These two terms appear in the variational formulation of \eqref{pb}, as explained next.

Recall that solutions to problem \eqref{pb}, with $L$ given by \eqref{L_K} or \eqref{levy}, are critical points of the functional
\begin{equation}\label{functional}
\mathcal E(u)=\frac12(u,u)-\int_\Omega F(x,u)
\end{equation}
among all functions $u$ satisfying $u\equiv0$ in $\R^n\setminus\Omega$.
Here, $F(x,u)=\int_0^u f(x,t)dt$, and $(\cdot,\cdot)$ is the bilinear form associated to $L$.
More precisely, in case that $L$ is given by \eqref{L_K}, we have
\begin{equation}\label{prodK}
(u,v)=\int_{\R^n}\int_{\R^n}\bigl(u(x)-u(x+y)\bigr)\bigl(v(x)-v(x+y)\bigr)K(y)dx\,dy,
\end{equation}
while in case that $L$ is given by \eqref{levy}, we have
\begin{equation}\label{prodK2}
(u,v)=\int_{\Omega}A(\nabla u,\nabla v)dx+\int_{\R^n}\int_{\R^n}\bigl(u(x)-u(x+y)\bigr)\bigl(v(x)-v(x+y)\bigr)K(y)dx\,dy,
\end{equation}
where $A(p,q)=p^TAq$ and $A=(a_{ij})$ is the matrix in \eqref{levy}.

Both Theorems \ref{th1} and \ref{corlevy} are particular cases of the more general result that we state next.
This result establishes nonexistence of bounded solutions $u\in W^{1,r}(\Omega)$, $r>1$, to problems of the form \eqref{pb} with variational operators $L$ satisfying a scaling inequality.

\begin{prop} \label{th2}
Let $E$ be a Banach space contained in $L^1_{\rm loc}(\R^n)$, and $\|\cdot\|$ be a seminorm in $E$.
Assume that for some $\alpha>0$ the seminorm $\|\cdot\|$ satisfies
\begin{equation}\label{rescale2}
w_\lambda\in E\quad \textrm{and}\quad \|w_\lambda\|\leq\lambda^{-\alpha}\|w\|\ \, \textrm{for every}\ w\in E\ \textrm{and}\ \lambda>1,
\end{equation}
where $w_\lambda(x)=w(\lambda x)$.

Let $\Omega\subset\R^n$ be any bounded star-shaped domain with respect to the origin, $p>1$, and $f\in C^{0,1}_{\rm loc}(\overline \Omega\times \R)$.
Consider the energy functional
\begin{equation}\label{functional2}
\mathcal E(u)=\frac1p\|u\|^p-\int_\Omega F(x,u),
\end{equation}
where $F(x,u)=\int_0^u f(x,t)dt$, and let $u$ be a critical point of $\mathcal E$
among all functions $u\in E$ satisfying $u\equiv0$ in $\R^n\setminus\Omega$.

Assume that $f$ is supercritical, in the sense that
\begin{equation}\label{supercriticalalpha}
\alpha t\,f(x,t)>nF(x,t)+x\cdot F_x(x,t)\quad\textrm{for all}\ \ x\in\Omega\ \ \textrm{and}\ \ t\neq0.
\end{equation}
If $u\in L^\infty(\Omega)\cap W^{1,r}(\Omega)$ for some $r>1$, then $u\equiv0$.
\end{prop}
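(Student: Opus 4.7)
The plan is to argue by dilations in the Pucci--Serrin spirit, comparing $\mathcal E(u)$ with $\mathcal E(u_\lambda)$ for $\lambda>1$, where $u_\lambda(x)=u(\lambda x)$. Since $\Omega$ is star-shaped with respect to the origin, $\Omega/\lambda\subset\Omega$ for every $\lambda\geq1$, so $u_\lambda$ vanishes outside $\Omega$ and, by \eqref{rescale2}, belongs to $E$; hence $u_\lambda$ is an admissible competitor.

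The crucial step couples the scaling inequality with the convexity of $v\mapsto\|v\|^p$, which holds because $\|\cdot\|$ is a seminorm and $p>1$. The subgradient inequality at $u$ reads
\[
\|u_\lambda\|^p \;\geq\; \|u\|^p + \tfrac{d}{ds}\|u+s(u_\lambda-u)\|^p\big|_{s=0^+},
\]
while the critical point condition, tested against the admissible direction $u_\lambda-u\in E$, identifies this right derivative as $p\int_\Omega f(x,u)(u_\lambda-u)\,dx$. Combining with $\|u_\lambda\|^p\leq\lambda^{-\alpha p}\|u\|^p$ and dividing by $\lambda-1>0$ yields
\[
\frac{\lambda^{-\alpha p}-1}{p(\lambda-1)}\,\|u\|^p \;\geq\; \frac{1}{\lambda-1}\int_\Omega f(x,u)\bigl(u_\lambda-u\bigr)\,dx.
\]

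Passing to $\lambda\to1^+$, the left-hand side tends to $-\alpha\|u\|^p$. The $W^{1,r}(\Omega)$ regularity of $u$, together with $u\equiv0$ outside $\Omega$ (so $u\in W^{1,r}_0(\Omega)$), yields $(u_\lambda-u)/(\lambda-1)\to x\cdot\nabla u$ in $L^r(\Omega)$, and since $f(\cdot,u)\in L^\infty(\Omega)$ the right side converges to $\int_\Omega f(x,u)(x\cdot\nabla u)\,dx$. Integrating by parts, using the zero trace of $u$ and $F(\cdot,0)\equiv 0$, rewrites this as $-\int_\Omega\bigl(nF(x,u)+x\cdot F_x(x,u)\bigr)dx$, so that
\[
\alpha\|u\|^p \;\leq\; \int_\Omega\bigl(nF(x,u)+x\cdot F_x(x,u)\bigr)\,dx.
\]

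Finally, testing the critical point condition against $\phi=u$ is elementary because $s\mapsto\|u+su\|^p=(1+s)^p\|u\|^p$ is smooth at $s=0$, giving $\|u\|^p=\int_\Omega u\,f(x,u)\,dx$. Substituting and invoking the pointwise supercriticality \eqref{supercriticalalpha} yields
\[
0\;\geq\;\int_\Omega\bigl[\alpha u f(x,u) - nF(x,u) - x\cdot F_x(x,u)\bigr]\,dx,
\]
while the integrand is strictly positive wherever $u\neq 0$; hence $u\equiv 0$. The main obstacle will be the passage to the limit above: verifying $(u_\lambda-u)/(\lambda-1)\to x\cdot\nabla u$ in $L^r(\Omega)$, which uses continuity of dilations in $W^{1,r}_0$, and justifying the integration by parts identity under only $W^{1,r}_0$ regularity with an $f$ that is merely locally Lipschitz.
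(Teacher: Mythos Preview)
Your argument is correct and follows essentially the same route as the paper: test the Euler--Lagrange equation with $u_\lambda$, exploit the scaling inequality together with the convexity of $\|\cdot\|^p$, and pass to the limit $\lambda\to1^+$ using the $W^{1,r}$ convergence of the difference quotient to $x\cdot\nabla u$. The only cosmetic difference is that you invoke convexity via the subgradient inequality $\|u_\lambda\|^p\ge\|u\|^p+p\langle D\Phi(u),u_\lambda-u\rangle$, whereas the paper isolates a H\"older-type bound $\langle D\Phi(u),v\rangle\le\|u\|^{p-1}\|v\|$ and shows that $I_\lambda:=\lambda^\alpha\langle D\Phi(u),u_\lambda\rangle$ satisfies $I_\lambda\le I_1$; both encodings of convexity yield the same inequality after letting $\lambda\to1^+$.
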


Some examples to which this result applies are second order variational operators such as the Laplacian or the $p$-Laplacian, the nonlocal operators in Theorems \ref{th1} or \ref{corlevy}, or the higher order fractional Laplacian $(-\Delta)^s$ (here $s>1$).
See Section~\ref{examples} for more examples.

\begin{rem}\label{W1,r}
Proposition \ref{th2} establishes nonexistence of nontrivial bounded  solutions belonging to $W^{1,r}(\Omega)$, $r>1$.
In general, removing the $W^{1,r}$ assumption may be done in two different situations:

First, it may happen that the space $E_\Omega=\{u\in E\,:\, u\equiv0\ \textrm{in}\ \R^n\setminus\Omega\}$ is embedded in $W^{1,r}(\Omega)$, $r>1$.
This happens for instance when considering the natural functional spaces associated to the Laplacian, the $p$-Laplacian with $p>1$, the higher order fractional Laplacian $(-\Delta)^s$ (with $s\geq1$), and of the nonlocal operators considered in Theorem \ref{corlevy}.

Second, even if the space $E_\Omega$ is not embedded in $W^{1,r}$, it is often the case that by some regularity estimates one can prove that critical points of \eqref{functional2} belong to $W^{1,r}$, $r>1$.
This occurs when the operator if the fractional Laplacian, and also in Theorem \ref{th1} (c), thanks to Proposition \ref{regularity}.
\end{rem}

As said before, for local operators of order 2, the nonexistence of regular solutions usually follows from Pohozaev-type or Pucci-Serrin identities \cite{PSerrin}.
Our proofs are in the spirit of these identities.
However, for nonlocal operators this type of identity is only known for the fractional Laplacian $(-\Delta)^s$ with $s\in(0,1)$ \cite{RS-Poh}, and requires a precise knowledge of the boundary behavior of solutions to \eqref{pb} \cite{RS} (that are not available for most $L$).
To overcome this, instead of proving an identity we prove an inequality which is sufficient to prove nonexistence.
This approach allows us to require much less regularity on the solution $u$ and, thus, to include a wide class of operators in our results.

The paper is organized as follows.
In Section \ref{examples} we give a list of examples of operators to which our results apply.
In Section \ref{sketch} we present the main ideas appearing in the proofs of our results.
In Section \ref{sec2} we prove Proposition \ref{th2}.
In Section \ref{sec1} we prove Theorems \ref{th1} and \ref{corlevy}.
Finally, in Section \ref{sec0} we prove Proposition \ref{regularity}.

\section{Examples}
\label{examples}

In this Section we give a list of examples to which our results apply.

\begin{itemize}
\item[(i)] First, note that if $K_1,...,K_m$ are kernels satisfying the hypotheses of Theorem \ref{th1}, and $a_1,...,a_m$ are nonnegative numbers,
then $K=a_1K_1+\cdots+a_mK_m$ also satisfies the hypotheses.
In particular, our nonexistence result applies to operators of the form
\[L=a_1(-\Delta)^{\alpha_1}+\cdots+a_m(-\Delta)^{\alpha_m},\]
with $a_i\geq0$ and $\alpha_i\in(0,1)$.
The critical exponent is $q=\frac{n+2\max\alpha_i}{n-2\max \alpha_i}$.

\item[(ii)] Theorem \ref{th1} may be applied to anisotropic operators $L$ of the form \eqref{L_K} with nonradial kernels such as
\[K(y)=H(y)^{-n-\sigma},\]
where $H$ is any homogeneous function of degree 1 whose restriction to $S^{n-1}$ is positive and $C^1$.
These operators are infinitesimal generators of $\sigma$-stable symmetric L\'evy processes.
The critical exponent is $q=\frac{n+\sigma}{n-\sigma}$.

\item[(iii)] Theorem \ref{th1} applies also to operators with kernels that do not have a power-like singularity at the origin.
For example, the one given by the kernel
\[K(y)=\frac{c}{|y|^{n+\sigma}\log\left(2+\frac{1}{|y|}\right)},\qquad \sigma\in(0,2),\]
whose singularity at $y=0$ is comparable to $|y|^{-n-\sigma}\bigl|\log |y|\bigr|^{-1}$.
In this example we also have that the critical exponent is $q=\frac{n+\sigma}{n-\sigma}$.

Other examples of operators that may not have a definite order are given by infinite sums of fractional Laplacians, such as $L=\sum_{k\geq1}\frac{1}{k^2}(-\Delta)^{s-\frac1k}$.

\item[(iv)] Theorem \ref{corlevy} applies to operators such as $L=-\Delta+(-\Delta)^s$, with $s\in(0,1)$, and also anisotropic operators whose nonlocal part is given by nonradial kernels, as in example~(ii).
For all these operators, the critical power is $q=\frac{n+2}{n-2}$.

\item[(v)] One may take in \eqref{functional2} the $W^{s,p}(\R^n)$ seminorm
    \[\|u\|^p=\int_{\R^n}\int_{\R^n}\frac{|u(x)-u(y)|^p}{|x-y|^{n+ps}}\,dx\,dy.\]
    This leads to nonexistence results for the $s$-fractional $p$-Laplacian operator, considered for example in \cite{Caff-survey,FP}.
    The critical power for this operator is $q=\frac{n+ps}{n-ps}$.

\item[(vi)]
    Our results can also be used to obtain a generalization of Theorem 8 in \cite{PSerrin}, where Pucci and Serrin proved nonexistence results for the bilaplacian $\Delta^2$ and the polylaplacian $(-\Delta)^K$, with $K$ positive integer.
    More precisely, Proposition \ref{th2} can be applied to the $H^s(\R^n)$ seminorm to obtain nonexistence of bounded solutions $u$ to \eqref{pb} with
    $L=(-\Delta)^s$, $s>1$.
    Note that the hypotheses $u\in W^{1,r}(\Omega)$ is always satisfied, since the fractional Sobolev embeddings yield that any function $u\in H^s(\R^n)$ that vanishes outside $\Omega$ belongs to $W^{1,r}(\Omega)$ for $r=2$ (see Remark \ref{W1,r}).

    As an example, when $n>2s$ and $f(u)=\lambda u+|u|^{q-1}u$, one obtains nonexistence of bounded solutions for $\lambda<0$ and $q\geq \frac{n+2s}{n-2s}$ and also for $\lambda\leq0$ and $q> \frac{n+2s}{n-2s}$, as in \cite{PSerrin}.

\item[(vii)] Proposition \ref{th2} can be applied to the usual $W^{1,p}(\Omega)$ norm to obtain nonexistence of bounded weak solutions to \eqref{pb} with $L=-\Delta_p$, the $p$-Laplacian.
These nonexistence results were obtained by Otani in \cite{O} via a Pohozaev-type inequality.

More generally, we may consider nonlinear anisotropic operators that come from setting
\[\|u\|^p=\int_\Omega H(\nabla u)^p |x|^\gamma dx\]
in \eqref{functional2}, where $H$ is any norm in $\R^n$.
In this case, the critical power is $q=\frac{n+\gamma+p}{n+\gamma-p}$.
For $\gamma=0$, some problems involving this class of operators were studied in \cite{BFK,Wulff1,Wulff5}.
For $\gamma\neq0$, nonexistence results for these type of problems were studied in \cite{AP}.

\item[(viii)] From Proposition \ref{th2} one may obtain also nonexistence results for $k$-Hessian operators $S_k(D^2u)$ with $2k<n$.
Recall that $S_k(D^2u)$ are defined in terms of the elementary symmetric polynomials acting on the
eigenvalues of $D^2u$, and that these are variational operators.
In the two extreme cases $k=1$ and $k=n$, we have $S_1(D^2u)=\Delta u$ and $S_n(D^2u) = \textrm{det}\,D^2u$.

Tso studied this problem in \cite{T}, and obtained nonexistence of solutions $u\in C^4(\Omega)\cap C^1(\overline\Omega)$ in smooth star-shaped domains via a Pohozaev identity.
Our results give only nonexistence for supercritical powers $q>\frac{(n+2)k}{n-2k}$, and not for the critical one.
As a counterpart, we only need to assume the solution $u$ to be $L^\infty(\Omega)\cap W^{1,r}(\Omega)$.

\end{itemize}

\section{Sketch of the proof}
\label{sketch}

In this section we sketch the proof of the nonexistence of critical points to functionals of the form
\begin{equation}\label{functional}
\mathcal E(u)=\frac12(u,u)+\int_\Omega F(u),\end{equation}
where $(\cdot,\cdot)$ is a bilinear form satisfying, for some $\alpha>0$,
\begin{equation}\label{rescale}
u_\lambda\in E\quad\textrm{and}\quad \|u_\lambda\|:=(u_\lambda,u_\lambda)^{1/2}\leq\lambda^{-\alpha}(u,u)^{1/2}\ \ \mbox{for all}\ \ \lambda\geq1,
\end{equation}
where $u_\lambda(x)=u(\lambda x)$.
Of course, this is a particular case of Proposition \ref{th2}, in which $p=2$, $E$~is a Hilbert space, and $f$ does not depend on $x$.
Note that in this case condition \eqref{rescale2} reads as \eqref{rescale}.
In case of Theorems \ref{th1} and \ref{corlevy}, the bilinear form is given by \eqref{prodK} and \eqref{prodK2}, respectively.

The proof goes as follows.
Since $u$ is a critical point of \eqref{functional}, then we have that
\[(u,\varphi)=\int_\Omega f(u)\varphi\,dx\quad \textrm{for all}\ \varphi\in E\ \textrm{satisfying}\ \varphi\equiv0\ \textrm{in}\ \R^n\setminus\Omega.\]
Next we use $\varphi=u_\lambda$, with $\lambda>1$, as a test function.
Note that, by \eqref{rescale}, we have $u_\lambda\in E$, and since $\Omega$ is star-shaped, then $u_\lambda\equiv 0$ in $\R^n\setminus\Omega$.
Hence $u_\lambda$ is indeed an admissible test function.
We obtain
\begin{equation}\label{vaca}
(u,u_\lambda)=\int_\Omega f(u)u_\lambda\, dx\ \ \mbox{for all}\ \ \lambda\geq1.
\end{equation}
Now, we differentiate with respect to $\lambda$ in both sides of \eqref{vaca}.
On the one hand, since $u\in L^\infty(\Omega)\cap W^{1,r}(\Omega)$, one can show ---see Lemma \ref{lemaweak}--- that
\[\left.\frac{d}{d\lambda}\right|_{\lambda=1^+}\int_\Omega f(u)u_\lambda\,dx=\int_\Omega (x\cdot\nabla u)f(u)\,dx=\int_\Omega x\cdot\nabla F(u)dx=-n\int_\Omega F(u)dx.\]
On the other hand,
\[\left.\frac{d}{d\lambda}\right|_{\lambda=1^+}(u,u_\lambda)=\left.\frac{d}{d\lambda}\right|_{\lambda=1^+}\left\{\lambda^{-\alpha}I_\lambda\right\}
=-\alpha(u,u)+\left.\frac{d}{d\lambda}\right|_{\lambda=1^+}I_\lambda,\]
where
\begin{equation}\label{I_lambda0}
I_\lambda=\lambda^\alpha(u,u_{\lambda}).
\end{equation}

We now claim that
\begin{equation}\label{0ineq0}
\left.\frac{d}{d\lambda}\right|_{\lambda=1^+}I_\lambda\leq 0.
\end{equation}
Indeed, using \eqref{rescale} and the Cauchy-Schwarz inequality, we deduce
\[I_\lambda\leq \lambda^\alpha\|u\|\|u_{\lambda}\|\leq \|u\|^2=I_1,\]
and thus \eqref{0ineq0} follows.
Therefore, we find
\[-n\int_\Omega F(u)dx=-\alpha\,(u,u)+\left.\frac{d}{d\lambda}\right|_{\lambda=1^+}I_\lambda\leq -\alpha\,(u,u),\]
and since $(u,u)=\int_\Omega uf(u)dx$,
\[\int_\Omega uf(u)dx\leq \frac{n}{\alpha}\int_\Omega F(u)dx.\]
From this, the nonexistence of nontrivial solutions for supercritical nonlinearities follows immediately.

In case of Theorem \ref{th1} (b) and Theorem \ref{corlevy}, with a little more effort we will be able to prove that \eqref{0ineq0} holds with strict inequality, and this will yield the nonexistence result for critical nonlinearities.

When the previous bilinear form is invariant under scaling, in the sense that \eqref{rescale} holds with an equality instead of an inequality, then one has $I_\lambda=(u_{\sqrt{\lambda}},u_{1/\sqrt{\lambda}})$.
In the case $L=(-\Delta)^s$, it is proven in \cite{RS-Poh} that
\[\left.\frac{d}{d\lambda}\right|_{\lambda=1^+} I_\lambda=\Gamma(1+s)\int_{\partial\Omega}\left(\frac{u}{\delta^s}\right)^2(x\cdot\nu)dS,\]
where $\delta(x)=\textrm{dist}(x,\partial\Omega)$.
This gives the boundary term in the Pohozaev identity for the fractional Laplacian.

\begin{rem}
This method can also be used to prove nonexistence results in star-shaped domains with respect to infinity or in the whole space $\Omega=\R^n$.
However, one need to assume some decay on $u$ and its gradient $\nabla u$, which seems a quite restrictive hypothesis.
More precisely, when $f(u)=|u|^{q-1}u$ and the operator is the fractional Laplacian $(-\Delta)^s$, this proof yields nonexistence of bounded solutions (decaying at infinity) for subcritical nonlinearities $q<\frac{n+2s}{n-2s}$ in star-shaped domains with respect to infinity, and for noncritical nonlinearities $q\neq\frac{n+2s}{n-2s}$ in the whole~$\R^n$.
The classification of entire solutions in $\R^n$ for the critical power $q=\frac{n+2s}{n-2s}$ was obtained in \cite{CLO}.
\end{rem}

\section{Proof of Proposition \ref{th2}}
\label{sec2}

In this section we prove Proposition \ref{th2}.
For it, we will need the following lemma, which can be viewed as a H\"older-type inequality in normed spaces.
For example, for $\|u\|=\left(\int_{\R^n} |u|^p\right)^{1/p}$, we recover the usual H\"older inequality (assuming that the Minkowski inequality holds).

\begin{lem}\label{lem}
Let $E$ be a normed space, and $\|\cdot\|$ a seminorm in $E$.
Let $p>1$, and define $\Phi=\frac1p\|\cdot\|^p$.
Assume that $\Phi$ is Gateaux differentiable at $u\in E$, and let $D\Phi(u)$ be the Gateaux differential of $\Phi$ at $u$.
Then, for all $v$ in $E$,
\[\langle D\Phi(u), v\rangle \leq p\,\Phi(u)^{1/p'}\,\Phi(v)^{1/p},\]
where $\frac1p+\frac{1}{p'}=1$.
Moreover, equality holds whenever $v=u$.
\end{lem}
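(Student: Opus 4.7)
The plan is to derive the inequality directly from the subadditivity of the seminorm $\|\cdot\|$, by dominating $\Phi(u+tv)$ by a scalar function of $t$ and differentiating at $t=0^+$.

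First, I would fix $v \in E$ and use the triangle inequality to obtain, for all $t\geq 0$,
\[\|u+tv\| \leq \|u\|+t\|v\|.\]
Raising to the $p$-th power (both sides are nonnegative) and dividing by $p$,
\[\Phi(u+tv)=\frac{1}{p}\|u+tv\|^p\leq \frac{1}{p}\bigl(\|u\|+t\|v\|\bigr)^p.\]
At $t=0$ both sides coincide with $\Phi(u)$, so subtracting $\Phi(u)$ and dividing by $t>0$ preserves the inequality. Letting $t\to 0^+$, the left-hand side converges to $\langle D\Phi(u),v\rangle$ by the assumed Gateaux differentiability of $\Phi$ at $u$, while the right-hand side is a smooth function of $t$ whose right derivative at $0$ is $\|u\|^{p-1}\|v\|$. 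This yields
\[\langle D\Phi(u),v\rangle\leq \|u\|^{p-1}\|v\|.\]

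The second step is the purely algebraic identification of this bound with the stated one. Using $1/p+1/p'=1$, and hence $p/p'=p-1$,
\[p\,\Phi(u)^{1/p'}\Phi(v)^{1/p}=p\left(\tfrac{1}{p}\|u\|^p\right)^{1/p'}\left(\tfrac{1}{p}\|v\|^p\right)^{1/p}=\|u\|^{p-1}\|v\|,\]
which completes the inequality. For the equality assertion, I would take $v=u$ and note that $\lambda\mapsto \Phi(\lambda u)=\lambda^p\Phi(u)$ is a differentiable function of $\lambda\geq 0$, so the definition of the Gateaux derivative gives $\langle D\Phi(u),u\rangle=p\,\Phi(u)$; on the other hand $p\,\Phi(u)^{1/p'}\Phi(u)^{1/p}=p\,\Phi(u)$ as well, so equality holds.

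Since the argument only invokes the triangle inequality for $\|\cdot\|$ together with the one-sided differentiability of a scalar function, there is essentially no hard step; the only minor point to watch is the degenerate case $\|u\|=0$, in which the stated inequality reduces to $\langle D\Phi(u),v\rangle \leq 0$ and is in fact an equality, since at such a $u$ the directional difference quotient is $O(t^{p-1})$ and the Gateaux derivative vanishes as $p>1$.
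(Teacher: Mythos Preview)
Your proof is correct and follows essentially the same approach as the paper: both dominate $\Phi(u+tv)$ by $\bigl(\Phi(u)^{1/p}+t\,\Phi(v)^{1/p}\bigr)^p$ via the triangle inequality for the seminorm, note equality at $t=0$, and differentiate at $t=0^+$. Your write-up is a bit more detailed in that you explicitly verify the equality case $v=u$ and address the degenerate case $\|u\|=0$, both of which the paper leaves implicit.
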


\begin{proof}
Since $\Phi^{1/p}$ is a seminorm, then by the triangle inequality we find that
\[\Phi(u+\varepsilon v)\leq \left\{\Phi(u)^{1/p}+\varepsilon\,\Phi(v)^{1/p}\right\}^p\]
for all $u$ and $v$ in $E$ and for all $\varepsilon\in\R$.
Hence, since these two quantities coincide for $\varepsilon=0$, we deduce
\[\langle D\Phi(u), v\rangle =\left.\frac{d}{d\varepsilon}\right|_{\varepsilon=0}\Phi(u+\varepsilon v)\leq
\left.\frac{d}{d\varepsilon}\right|_{\varepsilon=0}\left\{\Phi(u)^{1/p}+\varepsilon\, \Phi(v)^{1/p}\right\}^p=p\,\Phi(u)^{1/p'}\Phi(v)^{1/p},
\]
and the lemma follows.
\end{proof}

Before giving the proof of Proposition \ref{th2}, we also need the following lemma.

\begin{lem}\label{lemaweak}
Let $\Omega\subset\R^n$ be any bounded domain, and let $u\in W^{1,r}(\Omega)$, $r>1$.
Then,
\[\frac{u_\lambda-u}{\lambda-1}\rightharpoonup x\cdot\nabla u\quad\textrm{weakly in}\ L^1(\Omega),\]
where $u_\lambda(x)=u(\lambda x)$.
\end{lem}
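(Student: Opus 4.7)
The plan is to establish a uniform a priori bound for the difference quotient $(u_\lambda-u)/(\lambda-1)$ in $L^r(\Omega)$, and then identify the weak $L^1$ limit by a density argument. Since in the intended applications $u$ vanishes outside $\Omega$, I treat $u$ as extended by zero to all of $\R^n$; indeed this is forced by the expression $u_\lambda(x)=u(\lambda x)$, which otherwise would not be defined when $\lambda x$ leaves $\Omega$.

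First I would prove the key estimate: for all $\lambda\in(1,2]$,
\[
\Bigl\|\tfrac{u_\lambda-u}{\lambda-1}\Bigr\|_{L^r(\Omega)} \leq C\,\|\nabla u\|_{L^r(\R^n)},
\]
with $C$ depending only on $r$ and $\operatorname{diam}(\Omega)$. By density it suffices to verify this for $u\in C^\infty_c(\R^n)$. Writing $u(\lambda x)-u(x)=\int_1^\lambda x\cdot\nabla u(tx)\,dt$, Jensen's inequality (using $r\geq 1$) together with the change of variables $y=tx$ yields
\[
\int_\Omega \Bigl|\tfrac{u(\lambda x)-u(x)}{\lambda-1}\Bigr|^r dx \leq \frac{1}{\lambda-1}\int_1^\lambda t^{-n-r}\int_{t\Omega}|y|^r|\nabla u(y)|^r\,dy\,dt \leq (\operatorname{diam}\Omega)^r\|\nabla u\|_{L^r}^r,
\]
since $|y|\leq\operatorname{diam}\Omega$ on the support of $\nabla u$ and the average of $t^{-n-r}$ over $[1,\lambda]\subset[1,2]$ is bounded.

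Next, for $u\in C^\infty_c(\R^n)$, pointwise convergence $(u_\lambda-u)/(\lambda-1)\to x\cdot\nabla u$ on $\Omega$ is immediate, and combined with the uniform bound gives strong $L^r(\Omega)$ convergence, hence weak $L^1$ convergence. For a general $u\in W^{1,r}(\Omega)$, I would approximate $u$ by smooth $u_k$ with $u_k\to u$ in $W^{1,r}$ and test against an arbitrary $\varphi\in L^\infty(\Omega)\subset L^{r'}(\Omega)$. The error between $\int_\Omega (u_\lambda-u)(\lambda-1)^{-1}\varphi\,dx$ and $\int_\Omega(x\cdot\nabla u)\varphi\,dx$ splits into a smooth-$u_k$ piece (which tends to $0$ as $\lambda\to 1^+$ by the previous step) plus two remainders controlled via the a priori estimate applied to $u-u_k$, each bounded by $C\|\nabla(u-u_k)\|_{L^r}\|\varphi\|_{L^{r'}}$ uniformly in $\lambda$. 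Choosing $k$ large first and then $\lambda$ close to $1$ concludes the proof.

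The main obstacle is the uniform $L^r$ estimate, whose technical content lies in the change-of-variables bookkeeping and in verifying that the zero-extension of $u$ is admissible (so that $\nabla u$ vanishes outside $\Omega$ in the weak sense). This will be automatic under the $W^{1,r}_0$-type regularity of $u$ that holds in the applications of the lemma, via the boundary regularity supplied by Proposition~\ref{regularity}.
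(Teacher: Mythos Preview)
Your argument is correct and shares the paper's starting point---the uniform $L^r$ bound on the difference quotient---but diverges in how the limit is identified. The paper uses that bound together with reflexivity of $L^r$ ($r>1$) to extract a weak-$L^r$ subsequential limit $v$, and then identifies $v=x\cdot\nabla u$ by a duality trick: it moves the difference quotient onto the smooth test function $\phi$ via the change-of-variables identity
\[
\int_\Omega u\,\frac{\phi_\lambda-\phi}{\lambda-1}\,dx=-\lambda^{-n-1}\int_\Omega\frac{u_{1/\lambda}-u}{1/\lambda-1}\,\phi\,dx+\frac{\lambda^{-n}-1}{\lambda-1}\int_\Omega u\phi\,dx,
\]
and uses that $(\phi_\mu-\phi)/(\mu-1)\to x\cdot\nabla\phi$ strongly for $\phi\in C^\infty_c(\Omega)$; a subsequence argument then upgrades to full convergence. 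Your route instead proves the result first for smooth $u$ and passes to general $u$ by $W^{1,r}$-density, using the uniform estimate to control the approximation error. This is more direct and avoids the compactness/subsequence step, but it requires approximating $u$ itself in $W^{1,r}$---hence the zero-extension/$W^{1,r}_0$ issue you correctly flag---whereas the paper's duality argument only ever differentiates the test function and so needs no density assumption on $u$. One minor point: in your smooth-$u$ step, pointwise convergence plus a uniform $L^r$ bound does not by itself yield strong $L^r$ convergence; you should instead invoke the uniform $L^\infty$ bound available for $u\in C^\infty_c$ and apply dominated convergence.
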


\begin{proof}
Similarly to \cite[Theorem 5.8.3]{E}, it can be proved that
\[\int_\Omega \left|\frac{u_\lambda-u}{\lambda-1}\right|^rdx\leq C\int_\Omega |\nabla u|^rdx.\]
Thus, since $1<r\leq\infty$, then $L^r\cong (L^{r'})'$ and hence there exists a sequence $\lambda_k\rightarrow1$, and a function $v\in L^r(\Omega)$, such that
\[\frac{u_{\lambda_k}-u}{\lambda_k-1}\rightharpoonup v\quad\textrm{weakly in}\ L^r(\Omega).\]

On the other hand note that, for each $\phi\in C^\infty_c(\Omega)$, we have
\[\int_\Omega u\,(x\cdot \nabla \phi)\,dx=-\int_\Omega (x\cdot \nabla u)\,\phi\,dx-n\int_\Omega u\phi\,dx.\]
Moreover, it is immediate to see that, for $\lambda$ sufficiently close to 1,
\[\int_\Omega u\,\frac{\phi_\lambda-\phi}{\lambda-1}\,dx=-\lambda^{-n-1}\int_\Omega\frac{u_{1/\lambda}-u}{1/\lambda-1}\,\phi\,dx+\frac{\lambda^{-n}-1}{\lambda-1}\int_\Omega u\phi\,dx.\]
Therefore,
\[\begin{split}
\int_\Omega u\,(x\cdot\nabla\phi)\,dx&=\lim_{k\rightarrow\infty}\int_\Omega u\,\frac{\phi_{1/\lambda_k}-\phi}{1/\lambda_k-1}\,dx\\
&=\lim_{k\rightarrow\infty} -\int_\Omega\frac{u_{\lambda_k}-u}{\lambda_k-1}\,\phi\, dx-n\int_\Omega u\phi\, dx\\
&= -\int_\Omega v\phi\, dx-n\int_\Omega u\phi\, dx.
\end{split}\]
Thus, it follows that $v=x\cdot\nabla u$.

Now, note that this argument yields also that for each sequence $\mu_k\rightarrow1$ there exists a subsequence $\lambda_k\rightarrow1$ such that
\[\frac{u_{\lambda_k}-u}{\lambda_k-1}\rightharpoonup x\cdot\nabla u\quad\textrm{weakly in}\ L^r(\Omega).\]
Since this can be done for any sequence $\mu_k$, then this implies that
\[\frac{u_{\lambda}-u}{\lambda-1}\rightharpoonup x\cdot\nabla u\quad\textrm{weakly in}\ L^r(\Omega).\]
Finally, since $L^r(\Omega)\subset L^1(\Omega)$, the lemma follows.
\end{proof}

We can now give the:

\begin{proof}[Proof of Proposition \ref{th2}]
Define $\Phi=\frac1p\|\cdot\|^p$.
Since $u$ is a critical point of \eqref{functional2}, then
\begin{equation}\label{test}
\langle D\Phi(u),\varphi\rangle =\int_\Omega f(x,u)\varphi\,dx
\end{equation}
for all $\varphi\in E$ satisfying $\varphi\equiv0$ in $\R^n\setminus\Omega$.
Since $\Omega$ is star-shaped, we may choose $\varphi=u_\lambda$, with $\lambda\geq1$, as a test function in \eqref{test}.
We find
\begin{equation}\label{10}
\langle D\Phi(u),u_\lambda\rangle =\int_\Omega f(x,u)u_\lambda dx\quad \textrm{for all}\ \lambda\geq1.
\end{equation}

We compute now the derivative with respect to $\lambda$ at $\lambda=1^+$ in both sides of \eqref{10}.
On the one hand, using Lemma \ref{lemaweak} we find that
\begin{equation}\begin{split}\label{11}
\left.\frac{d}{d\lambda}\right|_{\lambda=1^+}\int_\Omega u_\lambda f(x,u)dx&=\int_\Omega (x\cdot\nabla u)f(x,u)\,dx\\
&=\int_\Omega \bigl\{x\cdot\nabla \bigl(F(x,u)\bigr)-x\cdot F_x(x,u)\bigr\}dx\\
&=-\int_\Omega \bigl\{n F(x,u)+x\cdot F_x(x,u)\bigr\}dx.
\end{split}\end{equation}
Note that here we have used also that $F(x,u)\in W^{1,1}(\Omega)$, which follows from $u\in L^\infty(\Omega)$, $(x\cdot\nabla u)f(x,u)\in L^r(\Omega)$, and $x\cdot F_x(x,u)\in L^\infty$.

On the other hand, let
\begin{equation}\label{I}
I_\lambda=\lambda^\alpha \langle D\Phi(u), u_{\lambda}\rangle .
\end{equation}
Then,
\begin{equation}\label{12}\begin{split}
\left.\frac{d}{d\lambda}\right|_{\lambda=1^+}\langle D\Phi(u), u_\lambda\rangle &=-\alpha\, \langle D\Phi(u), u\rangle +\left.\frac{d}{d\lambda}\right|_{\lambda=1^+}I_\lambda\\
&=-\alpha \int_\Omega u f(x,u)dx+\left.\frac{d}{d\lambda}\right|_{\lambda=1^+} I_\lambda,
\end{split}
\end{equation}
where we have used that $\langle D\Phi(u), u\rangle =\int_\Omega u f(x,u)dx$, which follows from \eqref{10}.

Now, using Lemma \ref{lem} and the scaling condition \eqref{rescale2}, we find
\[\begin{split}
I_\lambda&=\lambda^\alpha \langle D\Phi(u), u_{\lambda}\rangle
\leq p\,\lambda^\alpha\Phi(u)^{1/p'}\Phi(u_{\lambda})^{1/p}
= \lambda^{\alpha}\|u\|^{p/p'}\|u_\lambda\|\\
&\leq \|u\|^{p/p'+1} = \|u\|^p
=p\,\Phi(u)= \langle D\Phi(u), u\rangle  = I_1,
\end{split}\]
where $1/p+1/p'=1$.
Therefore,
\[\left.\frac{d}{d\lambda}\right|_{\lambda=1^+}I_\lambda\leq 0.\]
Thus, it follows from \eqref{10}, \eqref{11}, and \eqref{12} that
\[-\int_\Omega \bigl\{n F(x,u)+x\cdot F_x(x,u)\bigr\}dx\leq -\alpha \int_\Omega u f(x,u)dx,\]
which contradicts \eqref{supercriticalalpha} unless $u\equiv0$.
\end{proof}

\section{Proof of Theorems \ref{th1} and \ref{corlevy}}
\label{sec1}

This section is devoted to give the

\begin{proof}[Proof of Theorem \ref{th1}]
Recall that $u$ is a weak solution of \eqref{pb} if and only if
\begin{equation}\label{1}
(u,\varphi)=\int_\Omega f(x,u)\varphi\,dx
\end{equation}
for all $\varphi$ satisfying $(\varphi,\varphi)<\infty$ and $\varphi\equiv0$ in $\R^n\setminus\Omega$, where $(\cdot,\cdot)$ is given by \eqref{prodK}.
Note that \eqref{rescale2} is equivalent to \eqref{condicio}.
Thus, part (a) follows from Proposition \ref{th2}, where $\alpha=\frac{n-\sigma}{2}$.

Moreover, it follows from the proof of Proposition \ref{th2} that
\begin{equation}\label{4}
-\int_\Omega \bigl\{n F(x,u)+x\cdot F_x(x,u)\bigr\}dx=\frac{\sigma-n}{2}\int_\Omega uf(x,u)dx+\left.\frac{d}{d\lambda}\right|_{\lambda=1^+}I_\lambda,
\end{equation}
where
\[I_\lambda=\lambda^{\frac{n-\sigma}{2}}\int_{\R^n}\int_{\R^n}\bigl(u(x)-u(x+y)\bigr)\bigl(u_\lambda(x)-u_\lambda(x+y)\bigr)K(y)dx\,dy.\]

Thus, to prove part (b), it suffices to show that
\begin{equation}\label{5}
\left.\frac{d}{d\lambda}\right|_{\lambda=1^+}I_\lambda<0.
\end{equation}
Following the proof of Proposition \ref{th2}, by the Cauchy-Schwarz inequality we find
\[\begin{split}
I_{{\lambda}}&\leq \lambda^{\frac{n-\sigma}{2}}\|u\|\,\|u_\lambda\|\\
&= \sqrt{I_1}\left(\int_{\R^n}\int_{\R^n}\bigl(u(x)-u(x+z)\bigr)^2 \lambda^{-n-\sigma} K(z/\lambda)dx\,dz\right)^{1/2}\\
&= \frac{I_1}{2}+\frac12\int_{\R^n}\int_{\R^n}\bigl(u(x)-u(x+z)\bigr)^2 \lambda^{-n-\sigma} K(z/\lambda)dx\,dz\\
&\leq I_1.
\end{split}\]

Denote now $K(y)=g(y)/|y|^{n+\sigma}$.
Then,
\[\begin{split}
I_1-I_{{\lambda}}&\geq \frac12\int_{\R^n}\int_{\R^n}\bigl(u(x)-u(x+y)\bigr)^2\left\{K(y)-\lambda^{-n-\sigma}K(y/\lambda)\right\}dx\,dy\\
&=\frac12\int_{\R^n}\int_{\R^n}\frac{\bigl(u(x)-u(x+y)\bigr)^2}{|y|^{n+\sigma}}\bigl\{g(y)- g(y/\lambda)\bigr\}dx\,dy,
\end{split}\]
and therefore, by the Fatou lemma
\[-\left.\frac{d}{d\lambda}\right|_{\lambda=1^+}I_\lambda\geq \frac12\int_{\R^n}\int_{\R^n}\frac{\bigl(u(x)-u(x+y)\bigr)^2}{|y|^{n+\sigma}}\,y\cdot\nabla g(y)dx\,dy.\]
Now, recall that $g\in C^1(\R^n\setminus\{0\})$ is nondecreasing along all rays from the origin and nonconstant along some of them.
Then, we have that $y\cdot\nabla g(y)\geq0$ for all $y$, with strict inequality in a small ball $B$.
This yields that
\[\int_{\R^n}\int_{\R^n}\frac{\bigl(u(x)-u(x+y)\bigr)^2}{|y|^{n+\sigma}}\,y\cdot\nabla g(y)dx\,dy>0\]
unless $u\equiv0$.
Indeed, if $u(x)-u(x+y)=0$ for all $x\in \R^n$ and $y\in B$ then $u$ is constant in a neighborhood of $x$, and thus $u$ is constant in all of $\R^n$.

Therefore, using \eqref{4} we find that if $u$ is a nontrivial bounded solution then
\[\frac{n-\sigma}{2}\int_\Omega uf(x,u)dx<\int_\Omega \bigl\{n F(x,u)+x\cdot F_x(x,u)\bigr\}dx,\]
which is a contradiction with \eqref{supercriticalorcritical}.

Finally, part (c) follows from (a), (b), and Proposition \ref{regularity}.
\end{proof}

To end this section, we give the

\begin{proof}[Proof of Theorem \ref{corlevy}]
As explained in the Introduction, weak solutions to problem \eqref{pb} with $L$ given by \eqref{levy} are critical points to \eqref{functional2} with $p=2$ and with
\[\|u\|^2=\int_{\Omega}A(\nabla u,\nabla u)dx+\int_{\R^n}\int_{\R^n}\bigl(u(x)-u(x+y)\bigr)^2K(y)dxdy,\]
where $A(p,q)=p^TAq$ and $A=(a_{ij})$ is the matrix in \eqref{levy}.
It is immediate to see that this norm satisfies \eqref{rescale2} with $\alpha=\frac{n-2}{2}$ whenever \eqref{condicio2levy} holds.
Moreover, since $A$ is positive definite by assumption, then $\|u\|_{W^{1,2}(\Omega)}\leq c\|u\|^2$, and hence $u\in W^{1,r}(\Omega)$ with $r=2$.

Then, it follows from the proof of Proposition \ref{th2} that
\[\frac{n-2}{2}\int_\Omega u f(x,u)dx = \int_\Omega \left\{n F(x,u)+x\cdot F_x(x,u)\right\}dx+\left.\frac{d}{d\lambda}\right|_{\lambda=1^+}I_\lambda,\]
where
\begin{equation}\label{Icorlevy}
\begin{split}
I_\lambda&=\lambda^{\frac{n-2}{2}} \int_\Omega A(\nabla u,\nabla u_\lambda) dx\,+\\
&\qquad+\lambda^{\frac{n-2}{2}}\int_{\R^n}\int_{\R^n}\bigl(u(x)-u(x+y)\bigr)\bigl(u_\lambda(x)-u_\lambda(x+y)\bigr)K(y)dxdy.
\end{split}\end{equation}

Now, as in the proof of Theorem \ref{th1}, we find
\[I_1-I_\lambda\geq \frac12\int_{\R^n}\int_{\R^n}\frac{\bigl(u(x)-u(x+y)\bigr)^2}{|y|^{n+2}}\,\bigl\{g(y)-g(y/\lambda)\bigr\}dy,\]
where $g(y)=K(y)|y|^{n+2}$.
Thus, differentiating with respect to $\lambda$, we find that
\[\left.\frac{d}{d\lambda}\right|_{\lambda=1^+}I_\lambda\geq \frac12\int_{\R^n}\int_{\R^n}\frac{\bigl(u(x)-u(x+y)\bigr)^2}{|y|^{n+2}}\,y\cdot\nabla g(y)dy.\]
Moreover, since $\int_{\R^n}\frac{|y|^2}{1+|y|^2}K(y)dy<\infty$ and $g$ is radially nondecreasing, then it follows that $\lim_{t\rightarrow0}g(t\tau)=0$ for almost all $\tau\in S^{n-1}$.
Thus, if $K$ is not identically zero then $y\cdot\nabla g(y)$ is positive in a small ball $B$, and hence
\[\left.\frac{d}{d\lambda}\right|_{\lambda=1^+}I_\lambda>0\]
unless $u\equiv0$, which yields the desired result.
\end{proof}

\section{Proof of Proposition \ref{regularity}}
\label{sec0}

In this section we prove Proposition \ref{regularity}.
To prove it, we follow the arguments used in \cite{RS}, where we studied the regularity up to the boundary for the Dirichlet problem for the fractional Laplacian.
The main ingredients in the proof of this result are the interior estimates of Silvestre \cite{S} and the supersolution given by the next lemma.

\begin{lem}\label{supersolution}
Let $L$ be an operator of the form \eqref{L_K}, with $K$ symmetric, positive, and satisfying \eqref{condicioepsilon}.
Let $\psi(x)=(x_n)_+^{\epsilon/2}$.
Then,
\[L\psi\geq0\quad\textrm{in}\quad \R^n_+,\]
where $\R^n_+=\{x_n>0\}$.
\end{lem}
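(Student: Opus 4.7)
The plan is to compute $L\psi(x)$ by exploiting, direction by direction, the monotonicity of $g(y) := K(y)|y|^{n+\epsilon}$ along rays, together with the classical identity $(-\Delta)^{\epsilon/2}\psi \equiv 0$ in $\R^n_+$. First, by translation invariance in the first $n-1$ coordinates I may assume $x = x_n e_n$ with $x_n > 0$. Symmetrizing via $K(y) = K(-y)$,
\[
L\psi(x) = \frac{1}{2}\int_{\R^n} P(y)K(y)\,dy,\qquad P(y) := 2\psi(x) - \psi(x+y) - \psi(x-y),
\]
and I observe that $P(y)$ depends only on $u := |y_n|$: explicitly, $P(u) = 2x_n^{\epsilon/2} - (x_n+u)^{\epsilon/2} - (x_n-u)_+^{\epsilon/2}$. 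Concavity of $t \mapsto t^{\epsilon/2}$ on $[0,x_n]$ and an explicit computation on $[x_n,\infty)$ show that $P \geq 0$ on $[0, u_0]$ and $P \leq 0$ on $[u_0, \infty)$, with $u_0 := (2^{2/\epsilon}-1)x_n$.

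Next, passing to polar coordinates $y = r\theta$ and making the inner change of variables $u = r|\theta_n|$, I rewrite
\[
L\psi(x) = \frac{1}{2}\int_{S^{n-1}} |\theta_n|^{\epsilon}\left(\int_0^\infty P(u)\,\tilde g(u,\theta)\,u^{-1-\epsilon}\,du\right)d\theta,
\]
where $\tilde g(u,\theta) := g\bigl((u/|\theta_n|)\theta\bigr)$ is nonincreasing in $u$ for each fixed $\theta$ by \eqref{condicioepsilon}, and the set $\{\theta_n = 0\}$ contributes nothing since $P(0) = 0$. The decisive input is the one-dimensional identity $\int_0^\infty P(u)\,u^{-1-\epsilon}\,du = 0$, which follows by applying the same polar and change-of-variables procedure to the identity $\int_{\R^n} P(y)|y|^{-n-\epsilon}dy = 0$ (i.e. $(-\Delta)^{\epsilon/2}\psi = 0$ in the half-space) and dividing out the positive factor $\int_{S^{n-1}}|\theta_n|^\epsilon d\theta$. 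With this identity in hand, for each $\theta$ I set $c_\theta := \tilde g(u_0,\theta)$: by monotonicity of $\tilde g(\cdot,\theta)$ and the sign structure of $P$, the product $P(u)(\tilde g(u,\theta) - c_\theta)$ is nonnegative on $(0,\infty)$ (both factors flip sign at $u_0$ in the same direction). Subtracting $c_\theta$ times the 1D identity gives
\[
\int_0^\infty P(u)\,\tilde g(u,\theta)\,u^{-1-\epsilon}\,du = \int_0^\infty P(u)\bigl(\tilde g(u,\theta)-c_\theta\bigr)u^{-1-\epsilon}\,du \geq 0,
\]
and integrating against $|\theta_n|^{\epsilon}d\theta \geq 0$ yields $L\psi(x) \geq 0$.

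The hardest conceptual step is realizing that the ray-monotonicity of $g$ can be harnessed even when $g$ is not radial, by using a calibrating constant $c_\theta$ that varies with the direction; the fact that the 1D integral of $P$ against $u^{-1-\epsilon}$ is \emph{exactly} zero (not merely signed) is what makes the subtraction trick close. Technically, the remaining work is to verify absolute integrability so Fubini applies: $P(u) \sim u^2$ near $0$ and $P(u) \sim -u^{\epsilon/2}$ at infinity make $P(u)u^{-1-\epsilon}$ integrable on $(0,\infty)$ (using $0 < \epsilon < 2$), while the bounds on $K$ coming from \eqref{condicio} near the origin and \eqref{condicioepsilon} at infinity handle the $n$-dimensional integrands and justify the interchange of integrals.
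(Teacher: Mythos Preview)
Your proof is correct and follows essentially the same approach as the paper's: both symmetrize, pass to polar coordinates, and on each ray exploit the single sign change of the second difference $P$ together with the ray-monotonicity of $g=K|y|^{n+\epsilon}$ to compare against the pure power kernel, invoking $(-\Delta)^{\epsilon/2}\psi=0$ to close the inequality. The paper merely organizes this as ``prove $n=1$ first, then reduce $n>1$ to it via polar coordinates,'' which is exactly your change of variables $u=r|\theta_n|$; one small quibble is that your integrability remark cites \eqref{condicio}, which is not among the lemma's hypotheses---near the origin the L\'evy condition \eqref{**} (implicit in the form \eqref{L_K}) together with your observation $P(u)=O(u^2)$ is what does the job.
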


\begin{proof}
Assume first $n=1$.
Let $x\in \R_+$.
Since $K$ is symmetric, we have
\[L\psi(x)=\frac12\int_{-\infty}^{+\infty} \bigl(2\psi(x)-\psi(x+y)-\psi(x-y)\bigr)K(y)dy.\]
Then, it is immediate to see that there exists $\rho>0$ such that
\[2\psi(x)-\psi(x+y)-\psi(x-y)>0\quad \textrm{for}\ |y|<\rho\]
and
\[2\psi(x)-\psi(x+y)-\psi(x-y)<0\quad \textrm{for}\ |y|>\rho.\]
Thus, using that $K(y)|y|^{1+\epsilon}$ is nonincreasing in $(0,+\infty)$, and that $(-\Delta)^{\epsilon/2}\psi=0$ in $\R_+$, we find
\[\begin{split}
L\psi(x)&=\frac12\int_{|y|<\rho}\bigl(2\psi(x)-\psi(x+y)-\psi(x-y)\bigr)K(y)dy\\
&\qquad\qquad\qquad+\frac12\int_{|y|>\rho}\bigl(2\psi(x)-\psi(x+y)-\psi(x-y)\bigr)K(y)dy\\
&\geq \frac12\int_{|y|<\rho}\bigl(2\psi(x)-\psi(x+y)-\psi(x-y)\bigr)\frac{K(\rho)|\rho|^{1+\epsilon}}{|y|^{1+\epsilon}}dy\\
&\qquad\qquad\qquad+
\frac12\int_{|y|>\rho}\bigl(2\psi(x)-\psi(x+y)-\psi(x-y)\bigr)\frac{K(\rho)|\rho|^{1+\epsilon}}{|y|^{1+\epsilon}}dy\\
&=K(\rho)|\rho|^{1+\epsilon}\frac12\int_{-\infty}^{+\infty}\frac{2\psi(x)-\psi(x+y)-\psi(x-y)}{|y|^{1+\epsilon}}dy\\
&=K(\rho)|\rho|^{1+\epsilon}(-\Delta)^{\epsilon/2}\psi(x)=0.
\end{split}\]
Thus, the lemma is proved for $n=1$.

Assume now $n>1$, and let $x\in\R^n_+$.
Then,
\begin{equation}\label{radialcoord}\begin{split}
L\psi(x)&=\frac12\int_{\R^n} \bigl(2\psi(x)-\psi(x+y)-\psi(x-y)\bigr)K(y)dy\\
&=\frac14\int_{S^{n-1}} \left(\int_{-\infty}^{+\infty}\bigl(\psi(x)-\psi(x+t\tau)-\psi(x-t\tau)\bigr)t^{n-1}K(t\tau)dt\right)d\tau.
\end{split}\end{equation}
Now, for each $\tau\in S^{n-1}$, the kernel $K_1(t):=t^{n-1}K(t\tau)$ satisfies $K_1(t)t^{1+\epsilon}$ is
nonincreasing in $(0,+\infty)$, and in addition
\[\psi(x+\tau t)=(x_n+\tau_nt)_+^{\epsilon/2}=\tau_n^{\epsilon/2}(x_n/\tau_n+t)_+^{\epsilon/2}.\]
Thus, by using the result in dimension $n=1$, we find
\begin{equation}\label{radialcoord2}
\int_{-\infty}^{+\infty}\bigl(\psi(x)-\psi(x+t\tau)-\psi(x-t\tau)\bigr)t^{n-1}K(t\tau)dt\geq0.
\end{equation}

Therefore, we deduce from \eqref{radialcoord} and \eqref{radialcoord2} that $L\psi(x)\geq0$ for all $x\in \R^n_+$, and the lemma is proved.
\end{proof}

The following result is the analog of Lemma 2.7 in \cite{RS}.

\begin{lem}\label{lema1}
Under the hypotheses of Proposition \ref{regularity}, it holds
\[|u(x)|\leq C\delta(x)^{\epsilon/2}\quad\textrm{for all}\ \,x\in\Omega,\]
where $C$ is a constant depending only on $\Omega$, $\epsilon$, and $\|u\|_{L^\infty(\Omega)}$.
\end{lem}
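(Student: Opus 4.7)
The plan is to establish $|u(x)|\leq C\delta(x)^{\epsilon/2}$ by constructing, at each boundary point, an explicit upper barrier for $u$ built from the supersolution $\psi(y)=(y_n)_+^{\epsilon/2}$ of Lemma~\ref{supersolution}. The lower bound $u\geq -C\delta^{\epsilon/2}$ would follow by applying the same argument to $-u$, so it is enough to focus on the upper bound.

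First I would normalize $\|u\|_{L^\infty(\Omega)}=1$ and set $M:=\|f(\cdot,u)\|_{L^\infty(\Omega)}$, which is finite because $u$ is bounded and $f\in C^{0,1}_{\rm loc}$. Fix $x^\ast\in\Omega$ with $\rho:=\delta(x^\ast)$ small, and let $x_0\in\partial\Omega$ be a closest boundary point. Convexity of $\Omega$ supplies a supporting hyperplane at $x_0$, and the minimality of $|x^\ast-x_0|$ forces $x^\ast-x_0$ to be normal to it. After a rigid motion I may assume $x_0=0$, $x^\ast=\rho\,e_n$, and $\Omega\subset H:=\{y_n>0\}$.

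The heart of the proof is the construction of the barrier. I would take
\[
\Phi(y) := A\,\min\!\bigl((y_n)_+^{\epsilon/2},\,r^{\epsilon/2}\bigr)
\]
for a small constant $r>0$ and a large constant $A$ both to be fixed. Writing $\Phi=Ag$ with $g=\min(\psi,r^{\epsilon/2})$, for $y$ with $y_n<r/4$ one decomposes
\[
Lg(y) \;=\; L\psi(y) \,+\, \int_{(y+z)_n>r}\!\!\bigl((y+z)_n^{\epsilon/2}-r^{\epsilon/2}\bigr)\,K(z)\,dz.
\]
The first term is $\geq 0$ by Lemma~\ref{supersolution}, and the second (a non-negative tail contribution generated by the truncation) is bounded below by $c\,r^{\epsilon/2-\sigma}$ via the lower bound $K(z)\geq c|z|^{-n-\sigma}$ coming from \eqref{condicio} combined with \eqref{infsup}, together with the fact that $\sigma>\epsilon$ by hypothesis. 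Choosing $r$ small so that $cA\,r^{\epsilon/2-\sigma}\geq M$, and setting $A=(4/r)^{\epsilon/2}$ (which also gives $A(r/4)^{\epsilon/2}=1$), yields $L\Phi\geq M\geq Lu$ in $\Omega\cap\{y_n<r/4\}$, $\Phi\geq 0=u$ on $\R^n\setminus\Omega$, and $\Phi\geq 1\geq u$ on $\Omega\cap\{y_n\geq r/4\}$. The weak maximum principle for $L$ (valid since $K\geq 0$) then gives $u\leq\Phi$ in $\Omega\cap\{y_n<r/4\}$, and evaluating at $x^\ast=\rho e_n$ with $\rho<r/4$ produces
\[
u(x^\ast)\;\leq\;\Phi(\rho\,e_n)\;=\;A\,\rho^{\epsilon/2}\;\leq\;C\,\delta(x^\ast)^{\epsilon/2}.
\]
For $\rho\geq r/4$ the bound is trivial from $\|u\|_{L^\infty}\leq 1$. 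The dependence of $C$ on $M$ collapses to a dependence on $\|u\|_{L^\infty(\Omega)}$ (together with $\Omega$, $\epsilon$, $f$) via the local Lipschitz hypothesis on $f$.

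The main obstacle will be the quantitative supersolution estimate in the previous paragraph: since Lemma~\ref{supersolution} only delivers the non-strict inequality $L\psi\geq 0$ (in contrast to the exact identity $(-\Delta)^s\psi=0$ available for the pure fractional Laplacian in~\cite{RS}), the strictly positive lower bound $L\Phi\geq M$ must be extracted entirely from the tail contribution created by the truncation. Controlling this term uniformly---with constants depending only on $\Omega$, $\epsilon$, $\sigma$, and the structural constants of $K$---requires the simultaneous use of all four kernel hypotheses \eqref{condicio}, \eqref{condicioepsilon}, \eqref{grad}, \eqref{infsup}, and this bookkeeping is the technical heart of the argument.
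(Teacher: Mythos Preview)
Your approach is exactly the paper's: truncate the supersolution $\psi=(x_n)_+^{\epsilon/2}$ from Lemma~\ref{supersolution} to obtain a \emph{strict} supersolution, then slide it in along the supporting hyperplane at each boundary point (this is where convexity of $\Omega$ enters) and conclude by the maximum principle; the paper fixes the truncation height at $1$ and scales only the amplitude $C$, while you introduce the extra parameter $r$, but the two are equivalent. One small correction to your tail estimate: for $|z|$ small, condition~\eqref{condicio} yields only the \emph{upper} bound $K(z)\le C|z|^{-n-\sigma}$, not a lower bound---the lower bound you actually need is $K(z)\ge c|z|^{-n-\epsilon}$, which comes from~\eqref{condicioepsilon} together with~\eqref{infsup}, giving a tail contribution $\gtrsim r^{-\epsilon/2}$ rather than $r^{\epsilon/2-\sigma}$; since this still diverges as $r\to 0^+$, your argument goes through unchanged.
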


\begin{proof}
By Lemma \ref{supersolution}, we have that $\psi(x)=(x_n)_+^{\epsilon/2}$ satisfies $L\psi\geq0$ in $\R^n_+$.
Thus, we can truncate this $1D$ supersolution in order to obtain a strict supersolution $\phi$ satisfying
$\phi\equiv\psi$ in $\{x_n<1\}$, $\phi\equiv1$ in $\{x_n>1\}$, and $L\phi\geq c_0$ in $\{0<x_n<1\}$.

We can now use $C\phi$ as a supersolution at each point of the boundary $\partial\Omega$ to deduce $|u|\leq C\delta^{\epsilon/2}$ in $\Omega$;
see Lemma 2.7 in \cite{RS} for more details.
\end{proof}

We next prove the following result, which is the analog of Proposition 2.3 in \cite{RS}.

\begin{prop}
Under the hypotheses of Proposition \ref{regularity},
assume that $w\in L^\infty(\R^n)$ solves $Lw=g$ in $B_1$, with $g\in L^\infty$.
Then, there exists $\alpha>0$ such that
\begin{equation}\label{desired}
\|w\|_{C^\alpha(B_{1/2})}\leq C\left(\|g\|_{L^\infty(B_1)}+\|w\|_{L^\infty(\R^n)}\right),
\end{equation}
where $C$ depends only on $n$, $\epsilon$, $\sigma$, and the constant in \eqref{infsup}.
\end{prop}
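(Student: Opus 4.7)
The plan is to deduce the estimate from the interior Hölder estimate for translation-invariant nonlocal operators proved by Silvestre in \cite{S}. The core task is to check that, under \eqref{condicio}, \eqref{condicioepsilon}, \eqref{infsup}, and the Lévy integrability in \eqref{**}, the kernel $K$ lies in the class of kernels to which Silvestre's theorem applies; once this is done, the estimate follows by a direct citation.

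First, I would extract two-sided pointwise information on $K$ for $|y|\leq 1$. By \eqref{condicioepsilon}, on each ray through the origin the function $K(y)|y|^{n+\epsilon}$ is bounded below by its value at $|y|=1$, hence $K(y)\geq c_0\,K(\xi)\,|y|^{-n-\epsilon}$ with $\xi=y/|y|$; combined with the sphere-comparability \eqref{infsup}, this yields a uniform lower bound $K(y)\geq c\,|y|^{-n-\epsilon}$ for $|y|\leq 1$. Symmetrically, \eqref{condicio} gives $K(y)\leq C\,|y|^{-n-\sigma}$ for $|y|\leq 1$, while the Lévy condition in \eqref{**} controls the tail for $|y|\geq 1$ and ensures integrability of the truncated kernel.

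Second, I would invoke Silvestre's diminish-of-oscillation (and its iteration via a standard nonlocal Krylov–Safonov scheme). The key point is that his argument uses only a one-sided pointwise lower bound on $K$ in a small ball around the origin, plus the Lévy integrability of $K$ at infinity, to run the point-to-measure/ABP-type estimate and the tail truncation. Thus the lower bound $K(y)\geq c\,|y|^{-n-\epsilon}$ derived above, together with \eqref{**}, suffices. This produces a Hölder exponent $\alpha=\alpha(n,\epsilon,\sigma,C_{\eqref{infsup}})>0$ and a constant $C$ of the same dependence for which \eqref{desired} holds on $B_{1/2}$.

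The main technical point to be careful about is that the pointwise bounds on $K$ involve two different exponents, $\epsilon$ and $\sigma$, so the operator $L$ has no well-defined order. This is exactly the scenario motivating the paper, and it is incompatible with the classical uniformly elliptic setting where $K$ is comparable to a single fractional-Laplacian kernel $|y|^{-n-s}$. The saving feature is that Silvestre's proof is robust: only the lower bound enters the nonlocal barrier/ABP step, so the mismatch between $\epsilon$ and $\sigma$ merely degrades the resulting Hölder exponent $\alpha$ (which may be much smaller than $\epsilon/2$) without destroying the estimate itself. This is the only delicate verification; everything else is a direct transcription of Silvestre's statement to the present setting.
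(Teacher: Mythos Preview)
Your proposal is correct and follows essentially the same approach as the paper: both reduce the estimate to Silvestre's interior H\"older theorem \cite{S} by verifying that the kernel hypotheses \eqref{condicio}, \eqref{condicioepsilon}, \eqref{infsup} place $K$ in Silvestre's class. The paper does this by directly checking Silvestre's structural condition (2.1) (and noting its scale invariance and the need for $\sigma<2$), whereas you take the equivalent route of first extracting the pointwise bounds $c|y|^{-n-\epsilon}\leq K(y)\leq C|y|^{-n-\sigma}$ for $|y|\leq 1$; this is a minor presentational difference.
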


\begin{proof}
With slight modifications, the results in \cite{S} yield the desired result.

Indeed, given $\delta>0$ conditions \eqref{condicio}, \eqref{condicioepsilon}, and \eqref{infsup} yield
\begin{equation}\label{cond}
\kappa Lb(x)+2\int_{\R^n\setminus B_{1/4}}\bigl(|8y|^\eta-1\bigr)K(y)dy < \frac12\inf_{A\subset B_2,\ |A|>\delta}\int_A K(y)dy
\end{equation}
for some $\kappa$ and $\eta$ depending only on $n$, $\epsilon$, $\sigma$, and the constant in \eqref{infsup}.
Moreover, since our hypotheses are invariant under scaling, then \eqref{cond} holds at every scale.
Note that \eqref{cond} is exactly hypothesis (2.1) in \cite{S}.

Then, as mentioned by Silvestre in \cite[Remark 4.3]{S}, Lemma 4.1 in \cite{S} holds also with (4.1) therein replaced by $Lw\leq \nu_0$ in $B_1$, with $\nu_0$ depending on $\kappa$.
Therefore, the H\"older regularity of $w$ with the desired estimate \eqref{desired} follows from \cite[Theorem 5.1]{S}.

Note that it is important to have $\sigma$ strictly less than $2$, since otherwise condition \eqref{cond} does not hold.
\end{proof}

The following is the analog of Proposition 2.2 in \cite{RS}.

\begin{prop}
Under the same hypotheses of Proposition \ref{regularity},
assume that $w\in C^\beta(\R^n)$ solves $Lw=g$ in $B_1$, with $g\in C^\beta$, $\beta\in(0,1)$.
Then, there exists $\alpha>0$ such that
\[\|w\|_{C^{\beta+\alpha}(B_{1/2})}\leq C\left(\|g\|_{C^\beta(B_1)}+\|w\|_{C^\beta(\R^n)}\right)\quad \textrm{if}\quad \beta+\alpha<1,\]
\[\|w\|_{C^{0,1}(B_{1/2})}\leq C\left(\|g\|_{C^\beta(B_1)}+\|w\|_{C^\beta(\R^n)}\right)\quad \textrm{if}\quad \beta+\alpha>1,\]
where $C$ and $\alpha$ depend only on $n$, $\epsilon$, $\sigma$, and the constants in \eqref{infsup} and \eqref{grad}.
\end{prop}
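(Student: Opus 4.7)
The plan is to follow the approach of Proposition~2.2 in \cite{RS}, adapted to the more general operator $L$. The main tool is an incremental-quotient argument. Since the kernel $K$ does not depend on $x$, the operator $L$ is translation invariant, so $w(\cdot+h)$ solves $L(w(\cdot+h))=g(\cdot+h)$ in $B_{1-|h|}$. For $|h|$ sufficiently small I would introduce
\[
v_h(x)=\frac{w(x+h)-w(x)}{|h|^{\beta}},
\]
which satisfies $Lv_h=|h|^{-\beta}\bigl(g(\cdot+h)-g\bigr)$ in $B_{3/4}$, with $\|v_h\|_{L^\infty(\R^n)}\leq \|w\|_{C^\beta(\R^n)}$ and with the right-hand side bounded in $L^\infty(B_{3/4})$ by $\|g\|_{C^\beta(B_1)}$.

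Next I would apply the previous proposition to $v_h$ after a standard rescaling from $B_{3/4}$ to $B_1$; this rescaling preserves hypotheses \eqref{condicio}, \eqref{condicioepsilon}, and \eqref{infsup} with the same constants, so the conclusion of the previous proposition holds with a universal $\alpha>0$. This gives
\[
\|v_h\|_{C^\alpha(B_{1/2})}\leq C\bigl(\|g\|_{C^\beta(B_1)}+\|w\|_{C^\beta(\R^n)}\bigr),
\]
which unwinds to the second-difference bound
\[
|w(x+h)-w(x)-w(y+h)+w(y)|\leq C|h|^\beta|x-y|^\alpha
\]
for all $x,y\in B_{1/2}$ and all small $|h|$.

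In the case $\beta+\alpha<1$, combining this estimate with the symmetric bound obtained by interchanging the roles of $h$ and $x-y$ yields $w\in C^{\beta+\alpha}(B_{1/2})$ via a classical Campanato-type lemma, with the norm bound stated in the proposition. This step is routine and essentially identical to the fractional-Laplacian case in \cite{RS}.

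The main obstacle is the case $\beta+\alpha>1$, since a second-difference estimate alone does not imply Lipschitz regularity. Here the assumption \eqref{grad}, namely $|\nabla K(y)|\leq C K(y)/|y|$, becomes essential: it allows one to control the commutator between $L$ and a first-order difference operator, and hence to show that $L$ almost commutes with differentiation. Iterating the previous scheme on shrinking balls, one upgrades $v_h$ from being bounded in $L^\infty$ at scale $|h|^\beta$ to being bounded at scale $|h|$, from which $w\in C^{0,1}(B_{1/2})$ with the stated estimate follows. The delicate point is the uniform control, across all scales, of tail integrals involving $|\nabla K|$ against $K$, which is precisely where \eqref{grad} (together with \eqref{infsup} for the uniform comparability of $K$ on spheres) is used.
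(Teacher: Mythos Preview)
Your setup through the incremental quotients $v_h$ and the application of the previous proposition is exactly what the paper does, and it is correct. The paper's entire proof is one line: apply the previous proposition to $v_h=(w(\cdot+h)-w)/|h|^\beta$ and invoke Lemma~5.6 in \cite{CC}.

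The gap is in your treatment of the case $\beta+\alpha>1$. You claim that ``a second-difference estimate alone does not imply Lipschitz regularity'' and then propose an iteration using \eqref{grad} to control commutators. This is mistaken on both counts. The uniform bound $\|v_h\|_{C^\alpha(B_{1/2})}\le C$ that you already derived is precisely the hypothesis of Lemma~5.6 in \cite{CC}, and that lemma covers the case $\alpha+\beta>1$ as well (as long as $\alpha+\beta$ is not an integer), yielding $w\in C^{\alpha+\beta}=C^{1,\alpha+\beta-1}\subset C^{0,1}$ directly. Concretely, from $|v_h(x)-v_h(y)|\le C|x-y|^\alpha$ one gets, with $y=x-h$, the second central difference bound $|w(x+h)-2w(x)+w(x-h)|\le C|h|^{\alpha+\beta}$; for exponent strictly larger than $1$ this classically implies $C^{1,\alpha+\beta-1}$ regularity (this is the non-integer case of the Zygmund characterization, and is exactly what the Caffarelli--Cabr\'e lemma records). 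There is no obstacle here.

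Second, condition \eqref{grad} plays no role in this proposition. The previous $L^\infty\to C^\alpha$ estimate depends only on $n,\epsilon,\sigma$ and \eqref{infsup}, and the passage from $\|v_h\|_{C^\alpha}\le C$ to $w\in C^{\beta+\alpha}$ is a purely real-variable fact about difference quotients. The gradient bound on $K$ is used only later, in Corollaries~\ref{correg1} and~\ref{correg2}, where one cuts off $w$ and must estimate the tail contribution $L(\eta w)-\eta Lw$; it is not needed here. So your proposed commutator machinery is both unnecessary and aimed at the wrong step.
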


\begin{proof}
It follows from the previous Proposition applied to the incremental quotients $(w(x+h)-w(x))/|h|^\beta$ and from Lemma 5.6 in \cite{CC}.
\end{proof}

As a consequence of the last two propositions, we find the following corollaries.
The first one is the analog of Corollary 2.5 in \cite{RS}.

\begin{cor}
\label{correg1}
Under the same hypotheses of Proposition \ref{regularity},
assume that $w\in L^\infty(\R^n)$ solves $Lw=g$ in $B_1$, with $g\in L^\infty$.
Then, there exists $\alpha>0$ such that
\[\|w\|_{C^\alpha(B_{1/2})}\leq C\left(\|g\|_{L^\infty(B_1)}+\|w\|_{L^\infty(B_2)}+\|(1+|y|)^{-n-\epsilon}w(y)\|_{L^1(\R^n)}\right),\]
where $C$ depends only on $n$, $\epsilon$, $\sigma$, and the constants in \eqref{grad} and \eqref{infsup}.
\end{cor}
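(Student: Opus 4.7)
The plan is the classical truncation trick: I would split $w$ into a compactly supported piece and a far-field tail, apply the previous proposition (which requires the solution to be bounded on all of $\R^n$) to the compact piece, and absorb the contribution of the tail into the right-hand side. So fix a cutoff $\eta \in C^\infty_c(\R^n)$ with $\eta\equiv 1$ on $B_{3/2}$ and $\mathrm{supp}\,\eta\subset B_2$, and write $w=w_1+w_2$ with $w_1=\eta w$ and $w_2=(1-\eta)w$. Note that $w_1\in L^\infty(\R^n)$ with $\|w_1\|_{L^\infty(\R^n)}\le \|w\|_{L^\infty(B_2)}$, and that in $B_1$ we have $Lw_1=g-Lw_2$, so the goal is to show that $\|Lw_2\|_{L^\infty(B_1)}$ is controlled by the weighted $L^1$ norm of $w$.

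The key step is to estimate $Lw_2$ on $B_1$. For $x\in B_1$ we have $w_2(x)=0$ and $w_2$ is supported in $\{|z|\ge 3/2\}$, so after the change of variables $z=x+y$ the principal value disappears and
\[
|Lw_2(x)|\;\le\;\int_{|z|\ge 3/2}|w(z)|\,K(z-x)\,dz.
\]
For $x\in B_1$ and $|z|\ge 3/2$ we have $|z-x|\ge 1/2$ and $|z-x|\le |z|+1\le 2|z|$. I would use \eqref{condicioepsilon} (monotonicity of $K(y)|y|^{n+\epsilon}$ along rays) together with the fact that, by \eqref{grad}, $K$ is continuous on $\partial B_{1/2}$ and hence bounded there, to conclude
\[
K(y)\;\le\; \frac{C}{|y|^{n+\epsilon}}\qquad\text{for all }|y|\ge 1/2.
\]
This then gives $K(z-x)\le C(1+|z|)^{-n-\epsilon}$ for $x\in B_1$, $|z|\ge 3/2$, and therefore
\[
\|Lw_2\|_{L^\infty(B_1)}\;\le\; C\,\bigl\|(1+|y|)^{-n-\epsilon}w(y)\bigr\|_{L^1(\R^n)}.
\]

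To finish, I would apply the previous proposition to $w_1\in L^\infty(\R^n)$, which solves $Lw_1=g-Lw_2$ in $B_1$ with right-hand side bounded by $\|g\|_{L^\infty(B_1)}+\|Lw_2\|_{L^\infty(B_1)}$, to obtain
\[
\|w_1\|_{C^\alpha(B_{1/2})}\le C\bigl(\|g\|_{L^\infty(B_1)}+\|Lw_2\|_{L^\infty(B_1)}+\|w_1\|_{L^\infty(\R^n)}\bigr).
\]
Since $w_1\equiv w$ on $B_{3/2}\supset B_{1/2}$, combining these estimates yields the desired bound.

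The main obstacle is the pointwise kernel bound $K(y)\le C|y|^{-n-\epsilon}$ for $|y|\ge 1/2$: condition \eqref{condicioepsilon} only says that $K(y)|y|^{n+\epsilon}$ is nonincreasing along each ray, so to pass from this ``per ray'' monotonicity to a uniform bound one has to anchor the values at $|y|=1/2$ by something like continuity of $K$ on the unit sphere of radius $1/2$ (via \eqref{grad}); condition \eqref{infsup} is also available to compare values of $K$ on spheres of small radius if needed. Apart from this bookkeeping, the argument is a straightforward cutoff-and-apply strategy, identical in spirit to Corollary 2.5 of \cite{RS}.
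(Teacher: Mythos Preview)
Your proposal is correct and follows exactly the truncation argument of \cite[Corollary 2.5]{RS} that the paper invokes; the only addition here is the observation that \eqref{grad} gives $K\in C^1(\R^n\setminus\{0\})$, hence $K$ is bounded on $\partial B_{1/2}$, which together with \eqref{condicioepsilon} yields the tail bound $K(y)\le C|y|^{-n-\epsilon}$ for $|y|\ge 1/2$. This is precisely what the paper means by ``using \eqref{grad}, the proof is exactly the same as the one in \cite[Corollary 2.5]{RS}.''
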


\begin{proof}
Using \eqref{grad}, the proof is exactly the same as the one in \cite[Corollary 2.5]{RS}.
\end{proof}

The second one is the analog of Corollary 2.4 in \cite{RS}.

\begin{cor}
\label{correg2}
Under the same hypotheses of Proposition \ref{regularity},
assume that $w\in C^\beta(\R^n)$ solves $Lw=g$ in $B_1$, with $g\in C^\beta$, $\beta\in(0,1)$.
Then, there exists $\alpha>0$ such that
\[\|w\|_{C^{\beta+\alpha}(B_{1/2})}\leq C\left(\|g\|_{C^\beta(B_1)}+\|w\|_{C^\beta(\overline{B_2})}+\|(1+|y|)^{-n-\epsilon}w(y)\|_{L^1(\R^n)}\right)\]
if $\beta+\alpha<1$, while
\[\|w\|_{C^{0,1}(B_{1/2})}\leq C\left(\|g\|_{C^\beta(B_1)}+\|w\|_{C^\beta(\overline{B_2})}+\|(1+|y|)^{-n-\epsilon}w(y)\|_{L^1(\R^n)}\right)\]
if $\beta+\alpha>1$.
The constant $C$ depends only on $n$, $\epsilon$, $\sigma$ and the constants in \eqref{grad} and \eqref{infsup}.
\end{cor}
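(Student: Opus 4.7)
The plan is to reduce the corollary to the preceding proposition by a cutoff-and-split argument, exactly parallel to \cite[Corollary 2.4]{RS}. The preceding proposition already gives the desired $C^{\beta+\alpha}$ (or $C^{0,1}$) estimate, but requires $w\in C^\beta(\R^n)$, whereas here we only know $w\in C^\beta(\overline{B_2})$ plus a weighted $L^1$ tail bound. The task is to localize $w$ to $\overline{B_2}$ without losing much, and to absorb the nonlocal action of the tail into the right-hand side.

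Fix a cutoff $\eta\in C^\infty_c(B_2)$ with $\eta\equiv 1$ on $\overline{B_{3/2}}$, and write $w=w_1+w_2$ with $w_1=\eta w$ and $w_2=(1-\eta)w$. Then $w_1\in C^\beta(\R^n)$ with $\|w_1\|_{C^\beta(\R^n)}\leq C\|w\|_{C^\beta(\overline{B_2})}$, while $w_2\equiv 0$ on $\overline{B_{3/2}}$, and on $B_1$ the equation for $w$ becomes
\[Lw_1=g-Lw_2.\]
Applying the previous proposition to $w_1$, together with $w=w_1$ on $B_{1/2}$, yields
\[\|w\|_{C^{\beta+\alpha}(B_{1/2})}\leq C\bigl(\|g\|_{C^\beta(B_1)}+\|Lw_2\|_{C^\beta(B_1)}+\|w\|_{C^\beta(\overline{B_2})}\bigr),\]
and analogously in the Lipschitz case $\beta+\alpha>1$. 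Thus the whole estimate reduces to showing
\[\|Lw_2\|_{C^\beta(B_1)}\leq C\,\bigl\|(1+|y|)^{-n-\epsilon}w(y)\bigr\|_{L^1(\R^n)}.\]

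To estimate $Lw_2$ on $B_1$, since $w_2\equiv 0$ on $B_{3/2}$ the principal value is harmless and we may write, for $x\in B_1$,
\[Lw_2(x)=-\int_{\R^n\setminus B_{3/2}}w(z)\bigl(1-\eta(z)\bigr)K(z-x)\,dz,\]
where $|z-x|\geq 1/2$ throughout. Combining \eqref{condicioepsilon} with \eqref{infsup} gives the pointwise bound $K(y)\leq C|y|^{-n-\epsilon}$ for $|y|\geq 1/2$, which yields the $L^\infty$ estimate $|Lw_2(x)|\leq C\|(1+|y|)^{-n-\epsilon}w(y)\|_{L^1(\R^n)}$. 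For the Hölder seminorm one differentiates under the integral sign in $x$ and uses \eqref{grad}:
\[|\nabla_x Lw_2(x)|\leq\int_{\R^n\setminus B_{3/2}}|w(z)|\,|\nabla K(z-x)|\,dz\leq C\int_{\R^n\setminus B_{3/2}}|w(z)|\,(1+|z|)^{-n-\epsilon-1}\,dz,\]
where we have used that $|\nabla K(y)|\leq CK(y)/|y|\leq C|y|^{-n-\epsilon-1}$ for $|y|\geq 1/2$. Hence $Lw_2$ is in fact Lipschitz on $B_1$ with Lipschitz (and $L^\infty$) norm controlled by the weighted $L^1$ tail norm of $w$, which is stronger than the required $C^\beta$ estimate.

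The main obstacle I expect is the bookkeeping of the tail bound: one must verify that the pointwise estimate on $|\nabla K|$ coming from \eqref{grad} is integrable uniformly in $x\in B_1$ against the weight $(1+|z|)^{-n-\epsilon}|w(z)|$, and that no additional singularity arises as $z\to\infty$ or as $z-x$ approaches the boundary of $B_{3/2}$. All of this is handled by the combination of \eqref{condicioepsilon}, \eqref{infsup} and \eqref{grad}, essentially as in the fractional Laplacian case. Plugging the $C^\beta$ estimate for $Lw_2$ into the application of the previous proposition to $w_1$ gives the two inequalities in the statement of Corollary \ref{correg2}.
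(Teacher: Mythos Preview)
Your proof is correct and follows exactly the approach the paper has in mind: the cutoff-and-split argument from \cite[Corollary 2.4]{RS}, in which the role of \eqref{grad} is precisely to control $\nabla_x Lw_2$ and thus the $C^\beta$ norm of the tail term. The paper's own proof is the one-line remark that the argument of \cite[Corollary 2.4]{RS} carries over verbatim once \eqref{grad} is available, and your write-up is a faithful expansion of that argument.
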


\begin{proof}
Using \eqref{grad}, the proof is the same as the one in \cite[Corollary 2.4]{RS}.
\end{proof}

We can finally give the

\begin{proof}[Proof of Proposition \ref{regularity}]
Let now $x\in\Omega$, and $2R=\textrm{dist}(x,\partial\Omega)$.
Then, one may rescale problem \eqref{pb}-\eqref{L_K} in $B_R=B_R(x)$, to find that $w(y):=u(x+Ry)$ satisfies
$\|w\|_{L^\infty(B_2)}\leq CR^{\epsilon/2}$, $|w(y)|\leq CR^{\epsilon/2}(1+|y|^{\epsilon/2})$ in $\R^n$, and $\|L_Rw\|_{L^\infty(B_1)}\leq C R^\epsilon$,
where
\[L_Rw(y)=\int_{\R^n}\bigl(w(y)-w(y+z)\bigr)K_R(y)dy\]
and $K_R(y)=K(Ry)R^{n+\epsilon}$.

Moreover, it is immediate to check that \eqref{grad} yields
\[|\nabla K_R(y)|\leq C\frac{K_R(y)}{|y|},\]
with the same constant $C$ for each $R\in(0,1)$.
The other hypotheses of Proposition \eqref{regularity} are clearly satisfied by the kernels $K_R$ for each $R\in(0,1)$.

Hence, one may apply Corollaries \ref{correg1} and \ref{correg2} (repeatedly) to obtain
\[|\nabla w(0)|\leq CR^{\epsilon/2}.\]
From this, we deduce that $|\nabla u(x)|\leq CR^{\frac{\epsilon}{2}-1}$, and since this can be done for any $x\in\Omega$, we find
\[|\nabla u(x)|\leq C\delta(x)^{\frac{\epsilon}{2}-1}\quad \textrm{in}\ \Omega,\]
as desired.
The $C^{\epsilon/2}(\R^n)$ regularity of $u$ follows immediately from this gradient bound.
\end{proof}

\begin{rem}\label{remark}
The convexity of the domain has been only used in the construction of the supersolution.
To establish Proposition \ref{regularity} in general $C^{1,1}$ domains, one only needs to construct a supersolution which is not $1D$ but it is radially symmetric and with support in $\R^n\setminus B_1$, as in \cite[Lemma 2.6]{RS}, where it is done for the fractional Laplacian.
\end{rem}

\section*{Acknowledgements}

The authors thank Xavier Cabr\'e for his guidance and useful discussions on the topic of this paper.


\begin{thebibliography}{00}

\bibitem{AP} B. Abdellaoui, I. Peral, \emph{On quasilinear elliptic equations related to some Caffarelli-Kohn-Nirenberg inequalities}, Commun. Pure Appl. Anal. 2 (2003), 539–566.

\bibitem{BFK} M. Belloni, V. Ferone, B. Kawohl, \emph{Isoperimetric inequalities, Wulff shape and related questions for strongly nonlinear elliptic operators}, Z. Angew. Math. Phys. 54 (2003), 771-783

\bibitem{CC} L. Caffarelli, X. Cabr\'e, \emph{Fully Nonlinear Elliptic Equations}, American Mathematical Society Colloquium Publications 43, American Mathematical Society, Providence, RI, 1995.

\bibitem{Caff-survey} L. Caffarelli, \emph{Nonlocal equations, drifts, and games}, Nonlinear Partial Differential Equations, Abel Symposia 7 (2012), 37-52.

\bibitem{CLO} W. Chen, C. Li, B. Ou, \emph{Classification of solutions to an integral equation}, Comm. Pure Appl. Math. 59 (2006), 330-343.

\bibitem{Wulff5} F. Della Pietra, N. Gavitone, \emph{Anisotropic elliptic problems involving Hardy-type potentials}, J. Math. Anal. Appl. 397 (2013), 800-813.

\bibitem{E} L. C. Evans, \emph{Partial Differential Equations}, Second Edition, Graduate Studies in Mathematics, 1998.

\bibitem{FW} M.M. Fall, T. Weth, \emph{Nonexistence results for a class of fractional elliptic boundary value problems}, J. Funct. Anal. 263 (2012), 2205-2227.

\bibitem{FP} G. Franzina, G. Palatucci, \emph{Fractional $p$-eigenvalues}, Riv. Mat. Univ. Parma, to appear.

\bibitem{Wulff1} V. Ferone, B. Kawohl, \emph{Remarks on a Finsler-Laplacian}, Proc. Amer. Math. Soc. 137 (2009), 247-253.

\bibitem{O} M. Otani, \emph{Existence and nonexistence of nontrivial solutions of some nonlinear degenerate elliptic equations}, J. Funct. Anal. 76 (1988), 140-159.

\bibitem{P} S. I. Pohozaev, \emph{On the eigenfunctions of the equation $\Delta u+\lambda f(u)=0$}, Dokl. Akad. Nauk SSSR 165 (1965), 1408-1411.

\bibitem{PSerrin} P. Pucci, J. Serrin, \emph{A general variational inequality}, Indiana Univ. Math. J. 35 (1986), 681-703.

\bibitem{RZ} W. Reichel, H. Zou, \emph{Non-existence results for semilinear cooperative elliptic systems via moving spheres}, J. Differential Equations 161 (2000), 219-243.

\bibitem{RS-CRAS} X. Ros-Oton, J. Serra, \emph{Fractional Laplacian: Pohozaev identity and nonexistence results}, C. R. Math. Acad. Sci. Paris 350 (2012), 505-508.

\bibitem{RS} X. Ros-Oton, J. Serra, \emph{The Dirichlet problem for the fractional Laplacian: regularity up to the boundary}, J. Math. Pures Appl., to appear.

\bibitem{RS-Poh} X. Ros-Oton, J. Serra, \emph{The Pohozaev identity for the fractional Laplacian}, arXiv:1207.5986.

\bibitem{SV} R. Servadei, E. Valdinoci, \emph{Mountain pass solutions for non-local elliptic operators}, J. Math. Anal. Appl. 389 (2012), 887-898.

\bibitem{S} L. Silvestre, \emph{Hölder estimates for solutions of integro-differential equations like the fractional Laplace}, Indiana Univ. Math. J. 55 (2006), 1155–1174.

\bibitem{Sweers} G. Sweers, \emph{Lecture notes on maximum principles}, available online at {aw.twi.tudelft.nl/}\textasciitilde{sweers}.

\bibitem{T} K. Tso, \emph{Remarks on critical exponents for Hessian operators}, Ann. Inst. H. Poincar\'e Anal. Non Lin\'eaire 7 (1990), 113-122.

\end{thebibliography}
\end{document}